\def\newaliasedtheorem#1[#2]#3{%
  \newaliascnt{#1@alt}{#2}
  \newtheorem{#1}[#1@alt]{#3}
  \expandafter\newcommand\csname #1@altname\endcsname{#3}
}
\theoremstyle{theorem}
\newtheorem{theorem}{Theorem}
\theoremstyle{definition}
\newcommand{\cS}{{\mathcal S}}
\newcommand{\cG}{{\mathcal G}}
\newcommand{\F}{{\mathbbm F}}
\newcommand{\Hom}{\operatorname{Hom}}
\newcommand{\aut}{\operatorname{Aut}}
\newcommand{\psl}{\operatorname{PSL}}
\newcommand{\psil}{\operatorname{P\Sigma L}}
\newcommand{\SL}{\operatorname{SL}}
\newcommand{\set}[2]{\left\{#1\ \middle\vert\ #2\right\}}
\newcommand{\size}[1]{\left| #1 \right|}
\newcommand{\bigslant}[2]{{\raisebox{.2em}{$#1$}\left/\raisebox{-.2em}{$#2$}\right.}}
\newcommand{\lra}{\longrightarrow}
\newenvironment{psmmatrix}
  {\scriptsize\begin{pmatrix}}
  {\end{pmatrix}}
\newcommand{\pbibd}{\mathrm{PBIBD}}
\newcommand{\gpbibd}{\mathrm{GPBIBD}}
\title{On an infinite family of generalized PBIBDs}
\author{Daniel Kalmanovich}
\address{Einstein Institute of Mathematics\\ The Hebrew University of Jerusalem}
\email{daniel.kalmanovich@gmail.com}
\date{August 13, 2015}
\begin{document}

\maketitle


\begin{abstract}
	We consider a generalization of the notion of partially balanced incomplete block designs (PBIBDs), by relaxing the requirement that the underlying association scheme be commutative. An infinite family of such generalizations is constructed, one for each prime power $q$ congruent to $1$ modulo $4$.
\end{abstract}

\section{Introduction and Preliminaries}
Combinatorial designs are fascinating objects which have been studied for centuries, and have origins in diverse areas (cf. the introduction of~\cite{ColbD07} for a brief historical account). This paper focuses on a particular kind of combinatorial design, originating in the theory of the design of experiments, called partially balanced incomplete block design (PBIBD). Formally introduced by Bose and Nair in~\cite{BoseN39}, a PBIBD is a certain incidence structure together with an underlying symmetric association scheme\footnote{There are longstanding terminology disagreements in this area. In this paper {\em association scheme} means {\em homogeneous coherent configuration}.} which provides many regularity properties to the whole structure. We refer to Bailey's book~\cite{Bail04} which gives a statistician-friendly account of association schemes and their use in the design of experiments.

Association schemes arose naturally in algebraic combinatorics, independent of any applications to statistics, in works pertaining to permutation groups and, independently, to the graph isomorphism problem. We refer to Cameron's survey~\cite{Came03} for a brief overview of the permutation groups aspect, and the celebrated monograph~\cite{BrouCN89} for the many graph theoretic aspects of association schemes. 

Natural generalizations of the notion of a PBIBD were suggested and studied over the years. Among these, notions that take a more general underlying structure by relaxing the various requirements in the definition of an association scheme were introduced and studied in~\cite{Shah59} and in~\cite{Sinh82}. In~\cite{Nair64}, Nair suggested a generalization of PBIBDs to incidence structures with underlying association schemes which are not necessarily commutative.

In this paper we present a construction of an infinite family of such generalizations of PBIBDs, one for each prime power $q$ congruent to $1$ modulo $4$. This construction stemmed from the analysis of designs constructed by Nevo in~\cite{Nevo15}.

We assume that the reader is familiar with basics of association scheme theory, but for notational purposes we recall the definitions of an association scheme and a $\pbibd$.

\begin{definition}
An {\em association scheme with $m$ associate classes} is a finite set $X$ together with $m+1$ binary relations $R_i\subseteq X^2$, $0\leq i\leq m$, such that:
\begin{itemize}
\item $R_0=\set{(x,x)}{x\in X}$,
\item for each relation $R_i$, the inverse relation $R_i^{-1}=\set{(y,x)}{(x,y)\in R_i}$ is also an associate class,
\item $\bigcup\limits_{i=0}^d R_i=X^2$, and $R_i\cap R_j=\emptyset$ whenever $i\neq j$,
\item for any pair $(x,z)\in R_k$, the number of elements $y\in X$ such that $(x,y)\in R_i$ and $(y,z)\in R_j$ depends only on $i,j,k$. This number is denoted by $p_{ij}^k$.
\end{itemize}
The numbers in the last item above are called the {\em intersection numbers} of the association scheme. We will also use the term {\em $i$-associates} when referring to a pair $(x,y)\in R_i$.
\end{definition}

A {\em symmetric} association scheme is one in which all the binary relations are symmetric.

\begin{definition}\label{def:PBIBD}
A {\em partially balanced incomplete block design with $m$ associate classes} denoted $\pbibd(m)$ is a block design $D=(P,B)$ with $|P|=v$ points, $|B|=b$ blocks, where each block has size $k$, each point lies on $r$ blocks, and such that there is a symmetric association scheme defined on $P$ where, if $(x,y)$ are $j$-th associates then they occur together in precisely $\lambda_j$ blocks for $1\leq j\leq m$. The numbers $v,b,r,k,\lambda_j (1\leq j\leq m)$ are called the {\em parameters} of the $\pbibd(m)$.
\end{definition}

Our definition of a {\em generalized} PBIBD is verbatim \autoref{def:PBIBD} dropping the word symmetric. It follows that if $R_j$ is the inverse relation of $R_i$ then $\lambda_j=\lambda_i$.

A special feature of our family of $\gpbibd$s is that the number of non-vanishing values of the $\lambda_j$s is constant: we have just the value $5$ when $q=5^{\alpha}$, and the values $4$ and $1$ when $q$ is not a power of $5$. Moreover, there is exactly one class for each non-zero value.

\section{The construction}
Let $q=p^{\alpha}$ be a prime power congruent to $1$ modulo $4$, and consider the vector space $V={\F_q}^2$, where $\F_q$ denotes the finite field with $q$ elements. Let $\omega$ be a generator of ${\F_q}^{\times}$, and denote $i=\omega^{\frac{q-1}{4}}$. Our set of points is $P=\bigslant{V\setminus\{0\}}{\langle i\rangle}$, that is, we identify the four vectors $\pi$, $-\pi$, $i\pi$ and $-i\pi$, for all non-zero vectors $\pi\in V$, to obtain the set $P$ of $\frac{q^2-1}{4}$ points. Let $G=\psl(2,q)$, the quotient group of the matrix group $\SL(2,q)=\set{\begin{psmmatrix}a & b\\ c & d\end{psmmatrix}}{ad-bc=1}$ by its center $Z(\SL(2,q))=\left\{\begin{psmmatrix}1 & 0\\ 0 & 1\end{psmmatrix},\begin{psmmatrix}-1 & 0\\ 0 & -1\end{psmmatrix}\right\}$. Each element of $G$ is a set of $2$ matrices, and consider the natural action of $G$ on $P$ via multiplication (of representatives): 
$$\begin{psmmatrix}a & b\\ c & d\end{psmmatrix}\cdot \begin{psmmatrix}x\\ y\end{psmmatrix}=\begin{psmmatrix}ax+by\\ cx+dy\end{psmmatrix},$$ 
here $\begin{psmmatrix}a & b\\ c & d\end{psmmatrix}$ is a representative of the element $\left\{\begin{psmmatrix}a & b\\ c & d\end{psmmatrix},\begin{psmmatrix}-a & -b\\ -c & -d\end{psmmatrix}\right\}\in G$, and $\begin{psmmatrix}x\\ y\end{psmmatrix}$ is a representative of the element 
$\left\{\begin{psmmatrix}x\\ y\end{psmmatrix},\begin{psmmatrix}-x\\ -y\end{psmmatrix},\begin{psmmatrix}ix\\ iy\end{psmmatrix},\begin{psmmatrix}-ix\\ -iy\end{psmmatrix}\right\}\in P$. Define 
$$T=\left\{\begin{psmmatrix}1\\ 0\end{psmmatrix},\begin{psmmatrix}0\\ 1\end{psmmatrix},\begin{psmmatrix}1\\ 1\end{psmmatrix},\begin{psmmatrix}1+i\\ 1\end{psmmatrix},\begin{psmmatrix}i\\ 1\end{psmmatrix},\begin{psmmatrix}1\\ 1-i\end{psmmatrix}\right\},$$ and denote $B=T^G$, the orbit of $T$ under the natural action of $G$ (the induced action on $6$-sets). We have thus defined an incidence structure $D=(P,B)$ on which $G$ acts transitively both on the points and on the blocks. In fact, if we analyse the action of $G$ on pairs of points $P$ we can give a geometric interpretation to the basic block $T$, and thus to all blocks of $B$. Consider the points of $T$ as the vertices of a regular octahedron, with edges as depicted in the following figure:



\begin{center}
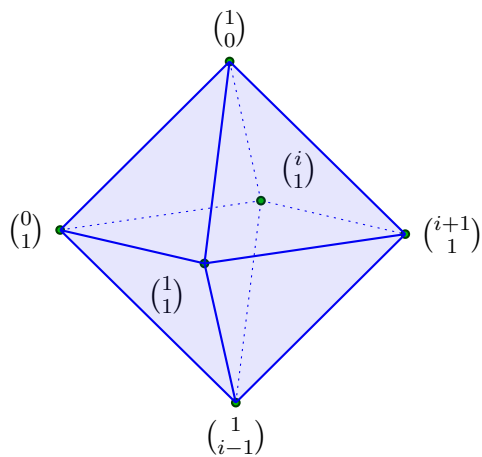
\begin{figure}[H]
\begin{tikzpicture}%
[x={(0.986706cm, -0.011933cm)},
y={(-0.017755cm, 0.983416cm)},
z={(-0.161542cm, -0.180970cm)},
scale=2.3,
back/.style={dotted, thin},
edge/.style={color=blue!95!black, thick},
facet/.style={fill=blue!95!black,fill opacity=0.1},
vertex/.style={inner sep=1pt,circle,draw=green!25!black,fill=green!75!black,thick,anchor=base}]
%
%
\coordinate (a) at (-1.00000, 0.00000, 0.00000);
\coordinate (b) at (0.00000, -1.00000, 0.00000);
\coordinate (c) at (0.00000, 0.00000, -1.00000);
\coordinate (d) at (0.00000, 0.00000, 1.00000);
\coordinate (e) at (0.00000, 1.00000, 0.00000);
\coordinate (f) at (1.00000, 0.00000, 0.00000);
\draw[edge,back] (-1.00000, 0.00000, 0.00000) -- (0.00000, 0.00000, -1.00000);
\draw[edge,back] (0.00000, -1.00000, 0.00000) -- (0.00000, 0.00000, -1.00000);
\draw[edge,back] (0.00000, 0.00000, -1.00000) -- (0.00000, 1.00000, 0.00000);
\draw[edge,back] (0.00000, 0.00000, -1.00000) -- (1.00000, 0.00000, 0.00000);
\node[vertex,label=left:{$\binom{0}{1}$}] at (a)     {};
\node[vertex,label=below:{$\binom{1}{i-1}$}] at (b)     {};
\node[vertex,label={[label distance=0.05cm]20:$\binom{i}{1}$}] at (c)     {};
\node[vertex,label={[label distance=0.05cm]200:$\binom{1}{1}$}] at (d)     {};
\node[vertex,label=above:{$\binom{1}{0}$}] at (e)     {};
\node[vertex,label=right:{$\binom{i+1}{1}$}] at (f)     {};

\fill[facet] (0.00000, 1.00000, 0.00000) -- (-1.00000, 0.00000, 0.00000) -- (0.00000, 0.00000, 1.00000) -- cycle {};
\fill[facet] (0.00000, 0.00000, 1.00000) -- (-1.00000, 0.00000, 0.00000) -- (0.00000, -1.00000, 0.00000) -- cycle {};
\fill[facet] (1.00000, 0.00000, 0.00000) -- (0.00000, 0.00000, 1.00000) -- (0.00000, 1.00000, 0.00000) -- cycle {};
\fill[facet] (1.00000, 0.00000, 0.00000) -- (0.00000, -1.00000, 0.00000) -- (0.00000, 0.00000, 1.00000) -- cycle {};
\draw[edge] (-1.00000, 0.00000, 0.00000) -- (0.00000, -1.00000, 0.00000);
\draw[edge] (-1.00000, 0.00000, 0.00000) -- (0.00000, 0.00000, 1.00000);
\draw[edge] (-1.00000, 0.00000, 0.00000) -- (0.00000, 1.00000, 0.00000);
\draw[edge] (0.00000, -1.00000, 0.00000) -- (0.00000, 0.00000, 1.00000);
\draw[edge] (0.00000, -1.00000, 0.00000) -- (1.00000, 0.00000, 0.00000);
\draw[edge] (0.00000, 0.00000, 1.00000) -- (0.00000, 1.00000, 0.00000);
\draw[edge] (0.00000, 0.00000, 1.00000) -- (1.00000, 0.00000, 0.00000);
\draw[edge] (0.00000, 1.00000, 0.00000) -- (1.00000, 0.00000, 0.00000);
\end{tikzpicture}
\caption{The basic octahedron $T$}
\end{figure}
\end{center}

With this description at hand we will freely switch between the terms {\em block} and {\em octahedron} when we refer to elements of $B$. Also, we will use the terms {\em edges} and {\em diagonals} for pairs of points of $D$ which form an edge in an octahedron, and pairs of points of $D$ which form a diagonal in an octahedron (a pair of antipodal points), respectively. Of course, any other pair of points do not lie in a block together.

\begin{lemma}
\begin{enumerate}
\item $G$ acts transitively on the edges of $D$.
\item $G$ acts transitively on the diagonals of $D$.
\end{enumerate} 
\end{lemma}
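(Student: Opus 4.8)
The plan is to reduce both statements to a single $G$-invariant of pairs of points of $P$, evaluated by a short computation on the fixed block $T$. Recall that $G$ acts on $P$ through the linear action of $\SL(2,q)$ on $V\setminus\{0\}$ and that matrices in $\SL(2,q)$ have determinant $1$. I would attach to a pair of distinct points $\{[u],[v]\}$ of $P$ with $u,v$ linearly independent the class
\[
\delta\bigl(\{[u],[v]\}\bigr)\;=\;\det(u\mid v)\cdot\langle i\rangle\;\in\;\F_q^{\times}/\langle i\rangle.
\]
This is well defined: rescaling $u$ or $v$ by a power of $i$ multiplies $\det(u\mid v)$ by an element of $\langle i\rangle$, and transposing $u$ and $v$ multiplies it by $-1\in\langle i\rangle$; and it is $G$-invariant because a lift to $\SL(2,q)$ of any $g\in G$ preserves determinants.

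The crucial point is that $\delta$ is a \emph{complete} invariant: if $\det(u\mid v)$ and $\det(u'\mid v')$ differ by a power of $i$, then after rescaling $u'$ we may assume they are equal, and then $(u'\mid v')(u\mid v)^{-1}$ lies in $\SL(2,q)$ and carries $(u,v)$ to $(u',v')$, so each fibre of $\delta$ is a single $G$-orbit of pairs. Next I would invoke $B=T^{G}$: $G$ is transitive on blocks, so (the octahedral structure on a block $gT$ being the $g$-image of that on $T$) every edge of $D$ is a $G$-image of an edge of $T$ and every diagonal of $D$ is a $G$-image of a diagonal of $T$. Hence $\mathrm{edges}(D)=\bigcup_{e}G\cdot e$ over the twelve edges $e$ of $T$, and likewise for diagonals, so it suffices to show that all twelve edges of $T$ share a value of $\delta$ and that all three diagonals of $T$ share a value of $\delta$: then $\mathrm{edges}(D)$ and $\mathrm{diagonals}(D)$ are each a single fibre of $\delta$, hence a single $G$-orbit, which is the assertion.

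It remains to do the bookkeeping: reading off from the figure which of the fifteen pairs of vertices of $T$ are the twelve edges and which are the three diagonals, compute the fifteen $2\times2$ determinants. They are all nonzero — so every pair of vertices of $T$ is genuinely in general position, and the reduction above applies — the twelve coming from edges all lie in $\langle i\rangle$, so $\delta\equiv\langle i\rangle$ on the edges, and the three coming from the diagonals $\{[\binom01],[\binom{1+i}1]\}$, $\{[\binom10],[\binom1{1-i}]\}$, $\{[\binom i1],[\binom11]\}$ equal $-(1+i)$, $1-i$ and $i-1$, whose pairwise ratios lie in $\langle i\rangle$; hence $\delta$ is constant on the three diagonals too.

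I expect the only real difficulty to be bookkeeping rather than mathematics: getting the descent of $\delta$ to unordered pairs of $P$ exactly right (the quotient by $\langle i\rangle$, the transposition sign, and $\det=1$ must all line up), and using the figure's vertex labelling consistently when sorting the fifteen pairs into edges and diagonals. As a sanity check, when $q$ is a power of $5$ one has $1+i\in\langle i\rangle$, so the edge- and diagonal-values of $\delta$ coincide — matching the remark in the introduction that there is then a single nonzero $\lambda_j$ — whereas for every other $q\equiv1\pmod 4$ they differ. A more pedestrian alternative would be to exhibit explicit elements of $\SL(2,q)$ stabilising $T$ and inducing the rotation group of the octahedron; producing a copy of $\alt_4$ already suffices, since it acts transitively on the twelve edges and on the three diagonals, but this is a comparable computation with less structural insight.
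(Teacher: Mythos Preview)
Your argument is correct and takes a genuinely different route from the paper's. The paper proves the two parts (in the later Lemmas on edges and on diagonals, for $p\neq 5$) by working inside the block stabiliser $G_T$: having first computed $|G_T|=12$ with an explicit list of representatives, it shows that the stabiliser in $G_T$ of a fixed edge $e$ is trivial, so by orbit--stabiliser the $G_T$-orbit of $e$ already has size $12$; for diagonals it simply exhibits the order-$3$ element $f$ cycling the three diagonals of $T$. This is exactly the ``pedestrian alternative'' you sketch at the end. Your determinant invariant $\delta$ is more structural: it parametrises the $G$-orbits on linearly independent pairs in $P$ by $\F_q^{\times}/\langle i\rangle$, so the twelve edge determinants landing in $\langle i\rangle$ and the three diagonal determinants landing in $(1-i)\langle i\rangle$ finish the argument at once, uniformly in $q$ and without ever touching $G_T$. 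The paper's approach has the side benefit of producing $G_T$ explicitly, which it needs anyway for the block count $b$; your approach has the side benefit of making it transparent that the edge and diagonal orbits coalesce precisely when $1-i\in\langle i\rangle$, i.e.\ when $p=5$, recovering the dichotomy the paper treats in two separate sections.
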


Let us denote by $\cS$ the Schurian association scheme of the action of $G$ on $P$. Then we have

\begin{corollary}
The block design $D=(P,B)$ is a $\gpbibd$ with underlying association scheme $\cS$.
\end{corollary}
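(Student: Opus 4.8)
The plan is to obtain the corollary from two soft facts about $G$-invariant designs --- that orbit sums of a transitive permutation group are ``balanced'' against its Schurian scheme --- together with the preceding lemma, which is what controls how many of the numbers $\lambda_j$ are non-zero.

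First I would record the combinatorial data of $D$. Since $B=T^G$, every block is a $G$-translate of $T$ and hence has exactly $k=6$ points; here one should first verify that the six vectors listed in the definition of $T$ represent six distinct elements of $P$, which is a short explicit check in $\F_q$. Transitivity of $G$ on $P$ yields a well-defined replication number: given $x,x'\in P$, choose $g\in G$ with $gx=x'$; then $A\mapsto gA$ is a bijection from the blocks through $x$ onto the blocks through $x'$, so this count is independent of $x$ and we call it $r$. Double counting incident point--block pairs gives $vr=bk$ with $v=\tfrac{q^2-1}{4}$ and $b=|G|/|G_T|$, where $G_T$ is the setwise stabiliser of $T$; thus $b$ and $r$ are pinned down once $G_T$ is identified.

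The heart of the argument is the ``partial balance'' condition against $\cS$. Because $G$ is transitive on $P$, the orbitals of $G$ on $P\times P$ are exactly the associate classes of the Schurian scheme $\cS$, and $\cS$ is an association scheme (homogeneous coherent configuration), in general non-commutative, in the sense recalled above. The relation ``$x$ and $y$ lie in a common block'' is $G$-invariant because $B$ is, hence it is a union of associate classes $R_j$; for each remaining class no pair of $j$-associates lies in a common block, i.e.\ $\lambda_j=0$. For a class $R_j$ inside this relation, fix $(x,y)\in R_j$ and set $\lambda_j:=\#\{A\in B:\ x,y\in A\}$. If $(x',y')\in R_j$ too, pick $g$ with $g(x,y)=(x',y')$; then $A\mapsto gA$ restricts to a bijection between the blocks containing $\{x,y\}$ and those containing $\{x',y'\}$, so $\lambda_j$ depends only on $j$, and $\lambda_j=\lambda_i$ whenever $R_j=R_i^{-1}$. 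This is precisely the defining property of a $\gpbibd$ with underlying association scheme $\cS$, so the corollary follows.

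The lemma is what makes the statement sharp rather than merely formal: as observed after the figure, the pairs lying in a common block are exactly the edges and the diagonals of octahedra, the two types being disjoint, and by the lemma the edges form a single $G$-orbit on unordered pairs, as do the diagonals. Hence the non-zero $\lambda_j$ are supported on at most two associate classes coming from edges (a class and its inverse, sharing one value $\lambda_{\mathrm{edge}}$) and at most two coming from diagonals (sharing a value $\lambda_{\mathrm{diag}}$), all others vanishing; the values $\lambda_{\mathrm{edge}},\lambda_{\mathrm{diag}}$ are then read off by counting blocks through one fixed edge, e.g.\ $\{\binom{1}{0},\binom{0}{1}\}$, and through one fixed diagonal of $T$. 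I do not expect a genuine obstacle: the corollary is essentially a consequence of the lemma and of $G$ being point- and block-transitive, and all that really needs care is the bookkeeping --- checking $|T|=6$ in $P$, checking that edges and diagonals are disjoint as relations, and computing $G_T$ and the two non-zero $\lambda$'s explicitly inside $\psl(2,q)$. (We neither use nor need the non-commutativity of $\cS$ here; that is the feature which makes $D$ a \emph{generalized} rather than an ordinary PBIBD.)
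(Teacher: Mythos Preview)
Your argument is correct and is precisely the reasoning the paper has in mind: the corollary is stated in the paper without proof, as an immediate consequence of the preceding lemma together with the definition of the Schurian scheme, and what you have written is exactly the standard unpacking of that implication (block- and point-transitivity give constant $k$ and $r$, and $G$-invariance of $B$ forces the block-coincidence count to be constant on each orbital of $\cS$). The only minor remark is that your last paragraph, computing $G_T$ and the explicit values $\lambda_{\mathrm{edge}},\lambda_{\mathrm{diag}}$, goes beyond what the corollary asserts; the paper defers those computations to the later lemmas and theorems, and they are not needed here.
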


\begin{lemma}\label{lem:pstab}
The stabilizer $G_{\pi}$ of a point $\pi\in P$ in $G$ is isomorphic to ${C_p}^{\alpha}\rtimes C_2$, where $q=p^{\alpha}$.
\end{lemma}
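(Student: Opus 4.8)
The plan is to pin down the stabilizer of one convenient point and then invoke transitivity. Since $G$ acts transitively on $P$, all point stabilizers are conjugate in $G$, hence isomorphic, so it suffices to compute $G_{\pi_0}$ for $\pi_0$ the class of $\begin{psmmatrix}1\\ 0\end{psmmatrix}$. As a consistency check on the target, $|G|=q(q^2-1)/2$ and $|P|=(q^2-1)/4$ give $|G_{\pi_0}|=2q=2p^{\alpha}=|{C_p}^{\alpha}\rtimes C_2|$, so the content of the lemma is purely the group structure.

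First I would work in $\SL(2,q)$ and describe the preimage of $G_{\pi_0}$ under the quotient map $\SL(2,q)\to G=\psl(2,q)$. A representative $\begin{psmmatrix}a & b\\ c & d\end{psmmatrix}$ sends $\begin{psmmatrix}1\\ 0\end{psmmatrix}$ to $\begin{psmmatrix}a\\ c\end{psmmatrix}$, and this vector represents $\pi_0$ if and only if $\begin{psmmatrix}a\\ c\end{psmmatrix}\in\langle i\rangle\cdot\begin{psmmatrix}1\\ 0\end{psmmatrix}$, that is, $c=0$ and $a\in\langle i\rangle=\{1,i,-1,-i\}$; together with $ad-bc=1$ this forces $d=a^{-1}$ with $b$ free. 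Hence the preimage is
\[
\widetilde H=\left\{\begin{psmmatrix}a & b\\ 0 & a^{-1}\end{psmmatrix}\ \middle|\ a\in\langle i\rangle,\ b\in\F_q\right\},
\]
a subgroup of $\SL(2,q)$ of order $4q$ that contains the center $Z=\{\pm I\}$; consequently $G_{\pi_0}=\widetilde H/Z$, of order $2q$.

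Next I would exhibit a semidirect-product decomposition of $\widetilde H$ and transport it to the quotient. Let $U=\left\{\begin{psmmatrix}1 & b\\ 0 & 1\end{psmmatrix}\ \middle|\ b\in\F_q\right\}$; the map sending $b$ to $\begin{psmmatrix}1 & b\\ 0 & 1\end{psmmatrix}$ is an isomorphism of $(\F_q,+)$ onto $U$, so $U\cong{C_p}^{\alpha}$, and the conjugation identity $\begin{psmmatrix}a & *\\ 0 & a^{-1}\end{psmmatrix}\begin{psmmatrix}1 & b\\ 0 & 1\end{psmmatrix}\begin{psmmatrix}a & *\\ 0 & a^{-1}\end{psmmatrix}^{-1}=\begin{psmmatrix}1 & a^{2}b\\ 0 & 1\end{psmmatrix}$ shows $U\trianglelefteq\widetilde H$. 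The diagonal subgroup $D=\left\{\begin{psmmatrix}a & 0\\ 0 & a^{-1}\end{psmmatrix}\ \middle|\ a\in\langle i\rangle\right\}$ is cyclic of order $4$ (because $i$ has multiplicative order $4$), and $U\cap D=\{I\}$, $UD=\widetilde H$, so $\widetilde H=U\rtimes D$. Since $Z\subseteq D$ and $U\cap Z=\{I\}$, reducing modulo $Z$ yields images $\overline U\cong U$, $\overline D\cong D/Z\cong C_2$ with $\overline U\cap\overline D=\{\overline I\}$, $\overline U\,\overline D=G_{\pi_0}$ and $\overline U\trianglelefteq G_{\pi_0}$; therefore $G_{\pi_0}=\overline U\rtimes\overline D\cong{C_p}^{\alpha}\rtimes C_2$.

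I do not expect a genuine obstacle; the argument is essentially bookkeeping, and the one point that needs care is that the decomposition $\widetilde H=U\rtimes D$ survives the quotient by $Z$, which it does precisely because $Z$ lies inside the complement $D$ and meets $U$ trivially. If desired one can also record the isomorphism type more precisely: the generator of $\overline D$ acts on $U\cong(\F_q,+)$ by $b\mapsto i^{2}b=-b$, which is nontrivial since $q$ is odd, so $G_{\pi_0}$ is the generalized dihedral group of $(\F_q,+)$ — but this refinement is not required for the statement.
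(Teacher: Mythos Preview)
Your proof is correct and follows essentially the same route as the paper: reduce by transitivity to the point $\begin{psmmatrix}1\\0\end{psmmatrix}$ and compute its stabilizer as the upper-triangular matrices with diagonal entries in $\langle i\rangle$. You are in fact more thorough than the paper, which simply exhibits the set $\set{\begin{psmmatrix}u & x\\ 0 & u^{-1}\end{psmmatrix}}{u\in\{1,i\},\ x\in\F_q}$ and leaves the semidirect-product identification to the reader, whereas you work via the preimage in $\SL(2,q)$ and explicitly verify that the decomposition $\widetilde H=U\rtimes D$ survives the quotient by $Z$.
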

\begin{proof}
Since $G$ acts transitively on $P$ it suffices to consider the stabilizer $G_{\binom{1}{0}}$ of $\begin{psmmatrix}1\\ 0\end{psmmatrix}$. So let $\begin{psmmatrix}a & b\\ c & d\end{psmmatrix}\in G_{\binom{1}{0}}$, that is
$$
\begin{psmmatrix}a & b\\ c & d\end{psmmatrix}\cdot \begin{psmmatrix}1\\0\end{psmmatrix}= \begin{psmmatrix}1\\0\end{psmmatrix}.
$$
It follows that $a=1$ or $i$, and $c=0$, thus
$$
\begin{psmmatrix}a & b\\ c & d\end{psmmatrix}=
\begin{psmmatrix}1 & b\\ 0 & d\end{psmmatrix}\quad\text{or}\quad
\begin{psmmatrix}i & b\\ 0 & d\end{psmmatrix}
$$
the determinant $=1$ condition implies that $d=1$ in the first case, and $d=-i$ in the second case, so we have
$$
\begin{psmmatrix}a & b\\ c & d\end{psmmatrix}=
\begin{psmmatrix}1 & b\\ 0 & 1\end{psmmatrix}\quad\text{or}\quad
\begin{psmmatrix}i & b\\ 0 & -i\end{psmmatrix}.
$$
Therefore
$$
G_{\binom{1}{0}}=\set{\begin{psmmatrix}u & x\\ 0 & u^{-1}\end{psmmatrix}}{u\in\{1,i\},x\in\F_q}.
$$
\end{proof}

The cases $q=5^{\alpha}$ and $q=p^{\alpha}$, $p\neq 5$ are different in an essential way, so we analyse them separately. The reason for the difference between the case of characteristic $5$ and other characteristics is the fact that in the case of characteristic $5$, the subgroup $\{1,-1,i,-i\}$ forms the multiplicative group of the prime subfield $\F_5$ (and not just a subgroup of the multiplicative group of the field). A manifestation of this fact that will be useful for our purposes later is formulated in the following lemma.

\begin{lemma}\label{lem:pdiff5}
$1+i=-i$ if and only if $q=5^{\alpha}$.
\end{lemma}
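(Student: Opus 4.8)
The plan is to reduce the stated equivalence to the single arithmetic fact ``$5=0$ in $\F_q$'', using only that $i$ is a primitive fourth root of unity. So I would begin by recording that $i^2=-1$: since $\omega$ generates the cyclic group $\F_q^\times$ of order $q-1$ and $4\mid q-1$ (because $q\equiv 1\pmod 4$), the element $i=\omega^{(q-1)/4}$ has multiplicative order exactly $4$; hence $i^2$ is the unique element of order $2$ in $\F_q^\times$, which is $-1$ (here $\operatorname{char}\F_q\neq 2$, so $-1\neq 1$).

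Next I would handle the forward implication. Assuming $1+i=-i$, rearrange to $2i=-1$; squaring and substituting $i^2=-1$ gives $-4=1$, that is $5=0$ in $\F_q$, so $\operatorname{char}\F_q=5$ and therefore $q=5^{\alpha}$. For the converse I would assume $q=5^{\alpha}$, so $5=0$; then $-1=4=2^2$, so $2$ is a primitive fourth root of unity, the primitive ones being exactly $2$ and $-2=3$, and the fourth roots of unity of $\F_q$ are precisely the four elements of $\F_5^\times$. The identity $1+i=-i$ is equivalent to $i=-2^{-1}$, and using $2\cdot 3=1$ in $\F_5$ this reads $i=-3=2$; choosing $\omega$ so that $\omega^{(q-1)/4}=2$ --- legitimate exactly because $2$ is a primitive fourth root of unity in characteristic $5$ --- we obtain $1+2=3=-2=-i$.

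I expect the only subtle point to be the converse: squaring is not an equivalence, so one cannot merely reverse the forward computation, and one must identify which of the two primitive fourth roots $\pm 2$ the element $i$ equals in characteristic $5$; this is precisely where the freedom in the choice of the generator $\omega$ enters. Everything else is a one-line computation, and the forward implication simultaneously explains why characteristic $5$ is special: in every other characteristic $1+i$ and $-i$ are distinct field elements.
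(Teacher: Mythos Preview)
Your forward implication is correct and close to the paper's: both rearrange $1+i=-i$ to $2i=-1$. The paper then substitutes $-1=i^{2}$ and cancels $i$ to obtain $i=2$, whence $-1=i^{2}=4$ and the characteristic is $5$; you instead square $2i=-1$ to get $-4=1$ directly. These are equivalent one-line manipulations.

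Where you differ meaningfully is in the converse. The paper's proof reduces the lemma to the equivalence $1+i=-i\Longleftrightarrow i=2$ and then only argues that $i=2$ forces characteristic $5$, leaving the step ``characteristic $5$ implies $i=2$'' implicit. You correctly observe that in characteristic $5$ the two primitive fourth roots of unity are $2$ and $-2=3$, and that $i=\omega^{(q-1)/4}$ could a priori be either, depending on the chosen generator $\omega$; the identity $1+i=-i$ holds only for the choice $i=2$. Your remedy---choose $\omega$ so that $i=2$, which is legitimate since replacing $\omega$ by $\omega^{-1}$ swaps $i$ with $i^{-1}=-i$---is exactly the tacit convention the paper relies on. So your argument is essentially the paper's, made more careful: what it buys is precision about where a choice of generator is actually being used.
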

\begin{proof}
$$
1+i=-i\qquad\Longleftrightarrow\qquad 2i=-1\qquad\Longleftrightarrow\qquad 2i=i^2\qquad\Longleftrightarrow\qquad i=2
$$
so we have $-1=i^2=2^2=4$, that is, $\F_q$ is a field of characteristic $5$.
\end{proof}

\section*{The case $p\neq 5$}

In this case the stabilizer $G_T$ of the basic block $T$ in $G$ is a group of rotations of the regular octahedron. It is in fact half of the full group of rotations of the regular octahedron. There are two types of rotations of a regular octahedron:
\begin{enumerate}[(a)]
\item rotation about the axis defined by a pair of antipodal points, and
\item rotation about the axis formed by connecting the centers of a pair of opposite faces.
\end{enumerate}
In the full group of rotations of the octahedron, the rotations of type (a) are of order $4$, and the rotations of type (b) are of order $3$ thus producing a group of order $24$. In our group $G_T$ we take the square of rotations of type (a) and all rotations of type (b), obtaining a group of order $12$. In this description $G_T=\langle g,f\rangle$ where 
\begin{itemize}
\item $g=\begin{psmmatrix} -i & -1+i\\ 0 & i \end{psmmatrix}$ is the $180^{\circ}$ rotation around the axis defined by the antipodal points $\begin{psmmatrix}1\\ 0\end{psmmatrix}$ and $\begin{psmmatrix}1\\ 1-i\end{psmmatrix}$, and
\item $f=\begin{psmmatrix}i & 1\\ i & 1-i\end{psmmatrix}$ is the rotation around the axis formed by connecting the centers of the opposite faces $\left\{\begin{psmmatrix}1\\ 1\end{psmmatrix},\begin{psmmatrix}1+i\\ 1\end{psmmatrix},\begin{psmmatrix}1\\ 0\end{psmmatrix}\right\}$ and $\left\{\begin{psmmatrix}0\\ 1\end{psmmatrix},\begin{psmmatrix}1\\ 1-i\end{psmmatrix},\begin{psmmatrix}i\\ 1\end{psmmatrix}\right\}$.
\end{itemize}

\begin{lemma}\label{lem:bstab}
The stabilizer $G_{\beta}$ of a block $\beta\in B$ in $G$ is isomorphic to the alternating group $A_4$.
\end{lemma}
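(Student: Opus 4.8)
The plan is to reduce at once to the basic block. Since $G$ acts transitively on $B=T^G$, all block stabilizers are conjugate in $G$, so it suffices to prove $G_T\cong A_4$. The discussion preceding the statement already presents $G_T=\langle g,f\rangle$ as a group of rotations of the octahedron $T$ of order $12$, with $g$ of order $2$ and $f$ of order $3$; what remains is to pin down the isomorphism type, and the most economical way is via a presentation.

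I would check, by a short computation with the given representatives in $\SL(2,q)$, that $g^{2}=f^{3}=(gf)^{3}=1$ in $G=\psl(2,q)$: one gets $g^{2}=-I$ directly, and since $f$ and $gf$ each have trace and determinant equal to $1$, the Cayley--Hamilton identity $M^{2}=M-I$ gives $M^{3}=-I$, whence $f^{3}=(gf)^{3}=-I$ as well. These are exactly the defining relations of the von Dyck (triangle) group $D(2,3,3)$, which is isomorphic to $A_4$; hence $G_T=\langle g,f\rangle$ is a homomorphic image of $A_4$. The only quotients of $A_4$ are $A_4$, $C_3$ and the trivial group, and $G_T$ contains the involution $g$ (its representative is not $\pm I$, so $g\neq 1$), so the quotient is neither $C_3$ nor trivial; therefore $G_T\cong A_4$, and the lemma follows by conjugacy of block stabilizers.

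A more geometric route to the same conclusion goes as follows: $G_T$ acts faithfully on the six vertices of $T$ --- a representative in $\SL(2,q)$ fixing $\binom{1}{0}$ and $\binom{0}{1}$ as points of $P$ must be diagonal, and fixing $\binom{1}{1}$ in addition forces it to be $\pm I$ --- so $G_T$ embeds in the rotation group of the octahedron, which is $S_4$, and an order-$12$ subgroup of $S_4$ can only be the unique index-$2$ subgroup $A_4$. This route presupposes $|G_T|=12$, which is the one point that genuinely requires care and the one place the hypothesis on the characteristic plays a role (only through $p\neq 2$): a rotation of $G$ about the diagonal axis of $T$ through $\binom{1}{0}$ lies in the point stabilizer $G_{\binom{1}{0}}\cong C_p^{\alpha}\rtimes C_2$ of \autoref{lem:pstab}, which is a generalized dihedral group and hence has no element of order $4$, so $G$ contains no quarter-turn of $T$ and $G_T$ is precisely the index-$2$ subgroup of the octahedral rotation group. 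I expect this step --- together with the routine remark that the three involutions $g$, $fgf^{-1}$, $f^{2}gf^{-2}$ about the three diagonal axes of $T$ are pairwise distinct --- to be the only place where attention is needed; the rest is arithmetic in $\SL(2,q)$ and elementary properties of $S_4$.
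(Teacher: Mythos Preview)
Your presentation argument cleanly shows $\langle g,f\rangle\cong A_4$, but it does not show $G_T=\langle g,f\rangle$; the sentence before the lemma that you invoke is a heuristic description of what $G_T$ will turn out to be, not a proof that $G_T$ is no larger. Likewise, in your geometric route, faithfulness of the $G_T$-action on the six points of $T$ only gives an embedding $G_T\hookrightarrow S_6$; the further step ``so $G_T$ embeds in the rotation group of the octahedron, which is $S_4$'' is unjustified, because nothing you have written forces $G_T$ to preserve the partition of $T$ into three diagonals. Your quarter-turn argument in $G_{\binom{1}{0}}$ presupposes exactly this octahedral containment, so it is circular.

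This is not a cosmetic issue, and the relevant hypothesis is $p\neq 5$, not $p\neq 2$ (recall $q\equiv 1\pmod 4$ already forces $p$ odd). When $p=5$, \autoref{lem:pdiff5} gives $1+i=-i$, the six points of $T$ become the projective line over $\F_5$, and $G_T\cong\psl(2,5)$ has order $60$ (\autoref{lem:bstab5}); in particular $G_T$ is not contained in any octahedral group inside $S_6$, so both of your routes genuinely fail there. The paper's proof meets this head-on: it computes $\size{{G_T}_{\binom{1}{0}}}=2$ by running through the elements of $G_{\binom{1}{0}}$ from \autoref{lem:pstab} and using \autoref{lem:pdiff5} (this is where $p\neq 5$ enters) to see that $\begin{psmmatrix}x\\u^{-1}\end{psmmatrix}$ can only land on one of four specified points of $T$; together with $\size{\binom{1}{0}^{G_T}}=6$ this gives $\size{G_T}=12$. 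Once that is in hand, either your presentation argument or the paper's observation that $G_T$ has no element of order $6$ identifies the group as $A_4$.
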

\begin{proof}
First, since $G$ acts transitively on the block set $B$ it is enough to consider $G_T$, the stabilizer of the basic block $T$. In fact, we will explicitly write representatives for the elements of $G_T$. We will first show that $|G_T|=12$, and then one can verify that
$$
\left\{{\scriptsize
\begin{aligned}
&\begin{pmatrix}
1 & 0 \\
0 & 1 \\
\end{pmatrix},
\begin{pmatrix}
0 & -i \\
-i & -1 \\
\end{pmatrix},
\begin{pmatrix}
-1 & i \\
i & 0 \\
\end{pmatrix},
\begin{pmatrix}
-i & 0 \\
-1-i & i \\
\end{pmatrix},
\begin{pmatrix}
-1+i & 1 \\
i & -i \\
\end{pmatrix},
\begin{pmatrix}
1 & -1 \\
1 & 0 \\
\end{pmatrix},\\
&\begin{pmatrix}
1 & -1-i \\
1-i & -1 \\
\end{pmatrix},
\begin{pmatrix}
-1-i & i \\
-1 & i \\
\end{pmatrix},
\begin{pmatrix}
i & 1 \\
i & 1-i \\
\end{pmatrix},
\begin{pmatrix}
-i & -1+i \\
0 & i \\
\end{pmatrix},
\begin{pmatrix}
0 & 1 \\
-1 & 1 \\
\end{pmatrix},
\begin{pmatrix}
i & -i \\
1 & -1-i \\
\end{pmatrix}
\end{aligned}}\right\}
$$
is a set of $12$ representatives of distinct elements of $G_T$. We use the orbit-stabilizer theorem:
$$
\size{G_T}=\size{{G_T}_{\binom{1}{0}}}\cdot \size{\begin{psmmatrix}1\\ 0\end{psmmatrix}^{G_T}}.
$$
One can check directly that the elements in the above list leave $T$ fixed, and that there are elements in that list that take $\begin{psmmatrix}1\\ 0\end{psmmatrix}$ to each of the points in $T$, so $\size{\begin{psmmatrix}1\\ 0\end{psmmatrix}^{G_T}}=6$. To compute ${G_T}_{\binom{1}{0}}=G_T\cap G_{\binom{1}{0}}$ we recall that by~\autoref{lem:pstab}
$$
G_{\binom{1}{0}}=\set{\begin{psmmatrix}u & x\\ 0 & u^{-1}\end{psmmatrix}}{u\in\{1,i\},x\in\F_q},
$$
now it is enough to show that the only elements in $G_{\binom{1}{0}}$ that fix $T$ are $\begin{psmmatrix}1 & 0\\ 0 & 1\end{psmmatrix}$ and $\begin{psmmatrix}i & 1-i\\ 0 & -i\end{psmmatrix}$. So let $\begin{psmmatrix}u & x\\ 0 & u^{-1}\end{psmmatrix}$ be an element that stabilizes $T$, since $p\neq 5$ it follows from~\autoref{lem:pdiff5} that $\begin{psmmatrix}u & x\\ 0 & u^{-1}\end{psmmatrix}\cdot \begin{psmmatrix}0 \\1\end{psmmatrix}=\begin{psmmatrix}x\\ u^{-1}\end{psmmatrix}$ is either $\begin{psmmatrix}0\\ 1\end{psmmatrix}$, $\begin{psmmatrix}i\\ 1\end{psmmatrix}$, $\begin{psmmatrix}1\\ 1\end{psmmatrix}$ or $\begin{psmmatrix}1+i\\ 1\end{psmmatrix}$. Each of the above four cases gives $2$ options for $\begin{psmmatrix}u & x\\ 0 & u^{-1}\end{psmmatrix}$, and out of these eight options the only elements that indeed fix $T$ are $\begin{psmmatrix}1 & 0\\ 0 & 1\end{psmmatrix}$ and $\begin{psmmatrix}i & 1-i\\ 0 & -i\end{psmmatrix}$, so $\size{{G_T}_{\binom{1}{0}}}=2$ and $\size{G_T}=2\cdot 6=12$. The isomorphism $G_T\cong A_4$ can now be deduced by noticing that $G_T$ has no element of order $6$, and the only non-abelian group of order $12$ which has no elements of order $6$ is the alternating group $A_4$.
\end{proof}

\begin{lemma}\label{lem:edges}
\begin{enumerate}[(a)]
\item The group $G$ acts transitively on the edges of $D$.
\item Every edge of $D$ is contained in $4$ ocathedra.
\end{enumerate}
\end{lemma}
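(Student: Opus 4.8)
The plan is to deduce (a) from the description of $G_T\cong A_4$ in \autoref{lem:bstab}, and then obtain (b) from (a) by a double count that reduces matters to computing the setwise stabilizer in $G$ of a single edge.

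For (a): since $B=T^G$, every edge of $D$ is an edge of some block $\beta=gT$, and the edges of $\beta$ are exactly the $g$-images of the edges of $T$; so it suffices to show that $G_T$ acts transitively on the $12$ edges of the octahedron $T$. By \autoref{lem:bstab} we may regard $G_T=A_4$ as the index-$2$ subgroup of the rotation group $R\cong S_4$ of the octahedron, acting faithfully on its $6$ vertices. The group $R$ is edge-transitive, and the stabilizer $R_e$ of an edge $e$ is the order-$2$ group generated by the half-turn about the axis through the midpoints of $e$ and of the opposite edge. On the six vertices this half-turn is a product of three transpositions, hence an odd permutation; under the identification of $R$ with $S_4$ given by its action on the four pairs of opposite faces it is a transposition, so it does not lie in $A_4=G_T$. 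Therefore $(G_T)_e=R_e\cap G_T$ is trivial, the $G_T$-orbit of $e$ has size $12$, and so $G_T$ — and hence $G$ — is transitive on edges.

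For (b): by (a) the number $N$ of octahedra in which a given edge occurs as an edge is independent of the edge. Counting the flags $(e,\beta)$ with $\beta\in B$ and $e$ an edge of $\beta$ in two ways gives $12\,|B|=N\,|E|$, where $E$ is the set of edges of $D$ (well defined since the edges and the diagonals of the blocks form two disjoint $G$-orbits on pairs, being the two $G_T$-orbits on $\binom{T}{2}$). Orbit--stabilizer applied to $B=T^G$ and, using (a), to $E=e_0^{\,G}$ gives $|B|=|G|/|G_T|=|G|/12$ and $|E|=|G|/|G_{e_0}|$, whence $N=12\,|B|/|E|=|G_{e_0}|$. So it remains to check that a single edge has setwise stabilizer of order $4$: for $e_0=\left\{\begin{psmmatrix}1\\0\end{psmmatrix},\begin{psmmatrix}0\\1\end{psmmatrix}\right\}$, an element of $G$ stabilizing $e_0$ either fixes both points or swaps them, and solving the resulting linear equations as in the proof of \autoref{lem:pstab} (using that two nonzero vectors represent the same point of $P$ exactly when they differ by a scalar in $\langle i\rangle$) shows the first kind to be the images of the diagonal matrices $\begin{psmmatrix}u&0\\0&u^{-1}\end{psmmatrix}$ and the second the images of the anti-diagonal matrices $\begin{psmmatrix}0&u\\-u^{-1}&0\end{psmmatrix}$, $u\in\langle i\rangle$. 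Modulo $Z(\SL(2,q))$ this is a Klein four-group, so $|G_{e_0}|=4$ and $N=4$.

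All steps are short computations; the one requiring care is the parity bookkeeping in (a) — one must identify $G_T$ inside $S_4$ precisely enough to be sure an edge-stabilizing half-turn lies outside $G_T$, equivalently that it is neither a type-(b) rotation nor the square of a type-(a) rotation in the description preceding \autoref{lem:bstab}. Once that is settled, (a) and (b) follow at once.
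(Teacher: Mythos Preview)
Your proof is correct. For (b) you follow essentially the paper's route (orbit--stabilizer to get $|E|$, then a double count), and your computation of $G_{e_0}$ is in fact slightly more complete than the paper's: the paper extracts only the two swapping elements from the computation in (a) and asserts they generate $G_e$, whereas you also determine the pointwise stabilizer explicitly and verify the whole group is Klein four.

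For (a), however, your argument is genuinely different. The paper shows $(G_T)_e=1$ by direct matrix calculation: it checks that the two elements of $G$ interchanging $\begin{psmmatrix}1\\0\end{psmmatrix}$ and $\begin{psmmatrix}0\\1\end{psmmatrix}$ are not on the explicit list of twelve matrices from \autoref{lem:bstab}, and then that the nontrivial element of $(G_T)_{\binom{1}{0}}$ fails to fix $\begin{psmmatrix}0\\1\end{psmmatrix}$. You instead use the geometric description of $G_T$ given just before \autoref{lem:bstab} --- that $G_T$ consists of the type-(b) face rotations together with the squares of the type-(a) vertex rotations --- and observe that the half-turn about an edge-midpoint axis is neither of these (equivalently, is odd under the identification $R\cong S_4$), so $R_e\cap G_T=1$. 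Your route is more conceptual and avoids any matrix arithmetic; the paper's route has the advantage of not depending on that geometric description, which the paper asserts but ultimately justifies only through the explicit list of twelve matrices. Since $S_4$ has a unique subgroup of order $12$, once one accepts that $G_T$ acts faithfully on $T$ through the rotation group your parity argument is watertight; the bookkeeping you flag at the end is exactly the point that needs checking.
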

\begin{proof}
\begin{enumerate}[{\it (a)}]
\item Since $G$ acts transitively on the octahedra, it suffices to show that $G_T$ acts transitively on the $12$ edges of $T$. This will follow from the orbit-stabilizer theorem once we show that stabilizer ${G_T}_e$ of the edge $e=\left\{\begin{psmmatrix}1\\ 0\end{psmmatrix},\begin{psmmatrix}0\\ 1\end{psmmatrix}\right\}$ (in $G_T$) is trivial. So let $g\in {G_T}_e$, first we will show that $g$ must fix both vertices $\begin{psmmatrix}1\\ 0\end{psmmatrix}$ and $\begin{psmmatrix}0\\ 1\end{psmmatrix}$. Write $g=\begin{psmmatrix}a & b\\ c & d\end{psmmatrix}$, and assume that
$$
\begin{psmmatrix}a & b\\ c & d\end{psmmatrix}\cdot \begin{psmmatrix}1\\ 0\end{psmmatrix}=\begin{psmmatrix}0\\ 1\end{psmmatrix}\quad\text{and}\quad\begin{psmmatrix}a & b\\ c & d\end{psmmatrix}\cdot \begin{psmmatrix}0\\ 1\end{psmmatrix}=\begin{psmmatrix}1\\ 0\end{psmmatrix}
$$
then from the first equality it follows that $a=0$ and $c=1,i,-1$ or $-i$, from the second equality if follows that $d=0$ and $b=1,i,-1$ or $-i$, and by the determinant $=1$ condition we have
$$
\begin{psmmatrix}a & b\\ c & d\end{psmmatrix}=\begin{psmmatrix}0 & -1\\ 1 & 0\end{psmmatrix}\quad\text{or}\quad \begin{psmmatrix}a & b\\ c & d\end{psmmatrix}=\begin{psmmatrix}0 & i\\ i & 0\end{psmmatrix}.
$$
Both elements above are not in $G_T$, thus $g$ must fix each of the vertices of $e$. It follows that $g\in {G_T}_{\binom{1}{0}}$, thus $g=\begin{psmmatrix}1 & 0\\ 0 & 1\end{psmmatrix}$ or $g=\begin{psmmatrix}-i & -1+i\\ 0 & i\end{psmmatrix}$, but $\begin{psmmatrix}-i & -1+i\\ 0 & i\end{psmmatrix}$ does not fix $\begin{psmmatrix}0\\ 1\end{psmmatrix}$ so we must have $g=\begin{psmmatrix}1 & 0\\ 0 & 1\end{psmmatrix}$ and so ${G_T}_e$ is trivial.
\item From the calculation in the proof of $(a)$ it follows that 
$$
G_e=\left\langle
\begin{psmmatrix}0 & -1\\ 1 & 0\end{psmmatrix},
\begin{psmmatrix}0 & i\\ i & 0\end{psmmatrix}
\right\rangle
$$
thus $\size{G_e}=4$. Using the orbit-stabilizer theorem we can now compute the total number of edges of $D$, that is, the size of the orbit of $e$ under the action of $G$:
$$
\size{e^G}=\frac{\size{G}}{\size{G_e}}=\frac{\frac{q(q^2-1)}{2}}{4}=\frac{q(q^2-1)}{8}.
$$
Let $E$ denote the set of edges of $D$. Now we apply a standard double-counting argument, we count the size of the set
$$
S=\set{(\epsilon,\beta)}{\epsilon\in E, \beta\in B, \epsilon \text{ is an edge of }\beta}
$$
in two ways:
\begin{itemize}
\item $\size{S}=12\cdot\size{B}$ because every block has $12$ edges, and
\item $\size{S}=\size{E}\cdot x$, where $x$ is the number of blocks that contain a given edge.
\end{itemize}
Therefore, we have
$$
x=\frac{12\cdot\size{B}}{\size{E}}=\frac{12\cdot\frac{q(q^2-1)}{24}}{\frac{q(q^2-1)}{8}}=4.
$$
\end{enumerate}
\end{proof}

\begin{lemma}\label{lem:diagonals}
\begin{enumerate}[(a)]
\item The group $G$ acts transitively on the diagonals of $D$.
\item Every diagonal of $D$ is contained in $1$ ocathedron.
\end{enumerate}
\end{lemma}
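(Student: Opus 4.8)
The plan is to run the proof of \autoref{lem:edges} again, with ``edge'' replaced by ``diagonal'', the number $12$ of edges of an octahedron replaced by its number $3$ of diagonals, and the relevant stabilizer computation adapted; part (a) will fall out of \autoref{lem:bstab}. For part (a), since $G$ is transitive on the blocks of $D$, it suffices to show that $G_T$ acts transitively on the three diagonals of the basic octahedron $T$. By \autoref{lem:bstab} we have $G_T\cong A_4$, and in its explicit description $G_T=\langle g,f\rangle$ the element $f$ is a $120^{\circ}$ rotation about an axis joining the centers of two opposite faces of $T$; such a rotation fixes no vertex axis of $T$, hence moves every diagonal of $T$. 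As $\langle f\rangle\cong C_3$ and $T$ has exactly three diagonals, this forces $\langle f\rangle$ --- and a fortiori $G_T$ --- to act transitively on them; in particular the stabilizer of a diagonal in $G_T$ has order $4$.

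For part (b), by (a) it is enough to treat one diagonal, and I would take $d=\left\{\binom{1}{0},\binom{1}{1-i}\right\}$, the axis of the rotation $g$. The main point is to show $|G_d|=4$. First I would compute the pointwise stabilizer $K=G_{\binom{1}{0}}\cap G_{\binom{1}{1-i}}$ from \autoref{lem:pstab}: a general element of $G_{\binom{1}{0}}$ has the form $\begin{psmmatrix}u & x\\ 0 & u^{-1}\end{psmmatrix}$ with $u\in\{1,i\}$ and $x\in\F_q$, and requiring that it also fix $\binom{1}{1-i}$ forces $x(1-i)=u^{-1}-u$, whose only solutions are $(u,x)=(1,0)$ and $(u,x)=(i,1-i)$; the second matrix is $g$ itself, so $K=\{1,g\}$ has order $2$. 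Hence $|G_d|\in\{2,4\}$, with $|G_d|=4$ exactly when some element of $G$ interchanges the two ends of $d$ --- which $\begin{psmmatrix}1 & -1-i\\ 1-i & -1\end{psmmatrix}\in\SL(2,q)$ (one of the twelve representatives of $G_T$ listed in the proof of \autoref{lem:bstab}) does. So $|G_d|=4$, and by the orbit-stabilizer theorem the number of diagonals of $D$ is $|\Delta|=|G|/|G_d|=\frac{q(q^2-1)}{8}$. Finally I would run the double count of \autoref{lem:edges} on $S=\left\{(\delta,\beta)\mid \delta\text{ is a diagonal of the block }\beta\right\}$: every block has $3$ diagonals, so $|S|=3|B|=3\cdot\frac{q(q^2-1)}{24}=\frac{q(q^2-1)}{8}$, while also $|S|=|\Delta|\cdot y$ with $y$ the number of octahedra through a fixed diagonal; comparing the two gives $y=1$.

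The one step that takes real care is the equality $|G_d|=4$: one must verify that the pointwise stabilizer of $d$ is exactly $\{1,g\}$ --- this is where \autoref{lem:pstab} is used --- and then that it is genuinely enlarged, rather than preserved, by a flip of the two endpoints. Everything else is either the octahedral-symmetry bookkeeping of part (a) or a near-verbatim repetition of the edge double count. Throughout, the standing hypothesis $p\neq5$ is used via $G_T\cong A_4$, hence via $|B|=\frac{q(q^2-1)}{24}$. As a sanity check, $|G_d|=4$ is consistent with part (a): it matches the stabilizer order $|(G_T)_d|=|G_T|/3=4$ found there, and since $(G_T)_d\le G_d$ it even shows $G_d=(G_T)_d$ (so in fact no element of $G$ outside $G_T$ stabilizes $d$).
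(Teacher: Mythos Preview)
Your proposal is correct and follows essentially the same route as the paper. For part~(a) the paper simply writes out that $f$ cycles the three diagonals explicitly, while you argue more conceptually via $|\langle f\rangle|=3$ acting without fixed diagonals; for part~(b) the paper computes all four elements of $G_d$ by solving the two systems (fix-fix and swap-swap) directly, whereas you first pin down the pointwise stabilizer $K=\{1,g\}$ from \autoref{lem:pstab} and then exhibit one swap to get $|G_d|=4$ --- the orbit-stabilizer and double-count steps are then identical.
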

\begin{proof}
\begin{enumerate}[{\it (a)}]
\item As in the previous proofs, since $G$ acts transitively on the octahedra, it suffices to show that $G_T$ acts transitively on the $3$ diagonals of $T$. To see that, it is enough to consider just the rotation $f$:
$$
\left\{\begin{psmmatrix}1\\ 0\end{psmmatrix},\begin{psmmatrix}1\\ 1-i\end{psmmatrix}\right\}\overset{f}{\mapsto} \left\{\begin{psmmatrix}1\\ 1\end{psmmatrix},\begin{psmmatrix}i\\ 1\end{psmmatrix}\right\}\overset{f}{\mapsto}
\left\{\begin{psmmatrix}1+i\\ 1\end{psmmatrix},\begin{psmmatrix}0\\ 1\end{psmmatrix}\right\}.
$$
\item The following arguments are similar to those given in the proof of~\autoref{lem:edges}, we will compute the stabilizer $G_d$ of the diagonal $d=\left\{\begin{psmmatrix}1\\ 0\end{psmmatrix},\begin{psmmatrix}1\\ 1-i\end{psmmatrix}\right\}$ in order to obtain the total number of diagonals in $D$ via the orbit-stabilizer theorem, and then use a double-counting argument to compute the number of octahedra that contain a given diagonal. So let $g=\begin{psmmatrix}a & b\\ c & d\end{psmmatrix}\in G_d$, then we have either
$$
(I)\quad
\begin{psmmatrix}a & b\\ c & d\end{psmmatrix}\cdot \begin{psmmatrix}1\\ 0\end{psmmatrix}=\begin{psmmatrix}1\\ 0\end{psmmatrix}\quad\text{and}\quad\begin{psmmatrix}a & b\\ c & d\end{psmmatrix}\cdot \begin{psmmatrix}1\\ 1-i\end{psmmatrix}=\begin{psmmatrix}1\\ 1-i\end{psmmatrix}
$$
or
$$
(II)\quad
\begin{psmmatrix}a & b\\ c & d\end{psmmatrix}\cdot \begin{psmmatrix}1\\ 0\end{psmmatrix}=\begin{psmmatrix}1\\ 1-i\end{psmmatrix}\quad\text{and}\quad\begin{psmmatrix}a & b\\ c & d\end{psmmatrix}\cdot \begin{psmmatrix}1\\ 1-i\end{psmmatrix}=\begin{psmmatrix}1\\ 0\end{psmmatrix}.
$$

In case $(I)$, the first equality implies $a=1$ or $i$, and $c=0$, thus $\begin{psmmatrix}a & b\\ c & d\end{psmmatrix}=\begin{psmmatrix}1 & b\\ 0 & 1\end{psmmatrix}$ or $\begin{psmmatrix}a & b\\ c & d\end{psmmatrix}=\begin{psmmatrix}i & b\\ 0 & -i\end{psmmatrix}$, plugging this into the second equation we get $b=0$ in the first case and $b=1-i$ in the second case, so $$\begin{psmmatrix}a & b\\ c & d\end{psmmatrix}=\begin{psmmatrix}1 & 0\\ 0 & 1\end{psmmatrix}\text{ or }\begin{psmmatrix}a & b\\ c & d\end{psmmatrix}=\begin{psmmatrix}i & 1-i\\ 0 & -i\end{psmmatrix}.$$

In case $(II)$, the first equality implies $a=1$ or $i$, and $c=1-i$ or $1+i$ (respectively), thus $\begin{psmmatrix}a & b\\ c & d\end{psmmatrix}=\begin{psmmatrix}1 & b\\ 1-i & d\end{psmmatrix}$ or $\begin{psmmatrix}a & b\\ c & d\end{psmmatrix}=\begin{psmmatrix}i & b\\ 1+i & d\end{psmmatrix}$, plugging this into the second equation we get $d=-1$ in the first case, and $d=-i$ in the second, so $\begin{psmmatrix}a & b\\ c & d\end{psmmatrix}=\begin{psmmatrix}1 & b\\ 1-i & -1\end{psmmatrix}$ or $\begin{psmmatrix}a & b\\ c & d\end{psmmatrix}=\begin{psmmatrix}i & b\\ 1+i & -i\end{psmmatrix}$, now the determinant $=1$ condition determines $b$ and we get
$$\begin{psmmatrix}a & b\\ c & d\end{psmmatrix}=\begin{psmmatrix}1 & -1-i\\ 1-i & -1\end{psmmatrix}\text{ or }\begin{psmmatrix}a & b\\ c & d\end{psmmatrix}=\begin{psmmatrix}i & 0\\ 1+i & -i\end{psmmatrix}.$$
It is now easy to check that the four representative matrices we found form a group, thus $\size{G_d}=4$. Again, the orbit-stabilizer theorem allows us to compute the total number of diagonals of $D$, that is, the size of the orbit of $d$ under the action of $G$:
$$
\size{d^G}=\frac{\size{G}}{\size{G_d}}=\frac{\frac{q(q^2-1)}{2}}{4}=\frac{q(q^2-1)}{8}.
$$
Let $\Delta$ denote the set of diagonals of $D$. Yet again we apply a double-counting argument, we count the size of the set
$$
S=\set{(\delta,\beta)}{\delta\in\Delta, \beta\in B, \delta \text{ is a diagonal of }\beta}
$$
in two ways:
\begin{itemize}
\item $\size{S}=3\cdot\size{B}$ because every block has $3$ diagonals, and
\item $\size{S}=\size{\Delta}\cdot x$, where $x$ is the number of blocks that contain a given diagonal.
\end{itemize}
Therefore, we have
$$
x=\frac{3\cdot\size{B}}{\size{\Delta}}=\frac{3\cdot\frac{q(q^2-1)}{24}}{\frac{q(q^2-1)}{8}}=1.
$$
\end{enumerate}
\end{proof}

\begin{theorem}\label{thm:pbibd}
The block design $D$ is a $\gpbibd(m)$ with parameters 
$$
v=\frac{q^2-1}{4},\quad b=\frac{q(q^2-1)}{24},\quad k=6,\quad r=q,\quad \lambda_1=4,\quad \lambda_2=1,\quad \lambda_3=\dots=\lambda_m=0.
$$
The underlying association scheme is the Schurian scheme $\cS(G,P)$ of $(G,P)$, and the number of associate classes is $$m=\frac{q-3}{2}.$$
\end{theorem}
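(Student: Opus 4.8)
The plan is to read the parameters $v,b,k,r$ off the construction and \autoref{lem:bstab}, to extract the $\lambda_j$ from \autoref{lem:edges} and \autoref{lem:diagonals}, and finally to obtain $m$ by computing the rank of the permutation action $(G,P)$. First, $v=\size{P}=\frac{q^2-1}{4}$ and $k=6$ are immediate (a block is an octahedron, with $6$ vertices). Since $B=T^G$, the orbit--stabilizer theorem gives $b=\size{G}/\size{G_T}$; using $\size{\psl(2,q)}=\frac{q(q^2-1)}{2}$ and $\size{G_T}=\size{A_4}=12$ (\autoref{lem:bstab}) we get $b=\frac{q(q^2-1)}{24}$. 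The replication number then follows from the flag count $vr=bk$, giving $r=\frac{6b}{v}=q$; alternatively one counts pairs $(g,\tau)\in G\times T$ with $g\tau=\pi$ to get $r=\frac{6\size{G_\pi}}{\size{G_T}}=\frac{6\cdot 2q}{12}=q$, using $\size{G_\pi}=2q$ from \autoref{lem:pstab}.

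For the block-intersection numbers, recall from the Corollary that $D$ is a $\gpbibd$ with underlying scheme $\cS=\cS(G,P)$; its associate classes are the non-diagonal orbitals of $G$ on $P\times P$, and because $G$ permutes $B$, the number of blocks through a pair of points is constant on each orbital, so each class carries a well-defined $\lambda_j$. By \autoref{lem:edges} the edges of octahedra form a single orbital, each edge lying in $4$ octahedra; by \autoref{lem:diagonals} the diagonals form a single orbital, each diagonal lying in exactly $1$ octahedron; every other pair of points lies in no common block. Numbering the classes so that these two come first yields $\lambda_1=4$, $\lambda_2=1$ and $\lambda_3=\cdots=\lambda_m=0$.

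It remains to show $m=\frac{q-3}{2}$, equivalently that the rank of $(G,P)$ --- the number of orbits of a point stabilizer $G_\pi$ on $P$ --- equals $\frac{q-1}{2}$. Take $\pi=\binom{1}{0}$ and $G_\pi=\set{\begin{psmmatrix}u & b\\ 0 & u^{-1}\end{psmmatrix}}{u\in\{1,i\},\,b\in\F_q}$ (\autoref{lem:pstab}), and partition $P$ into three $G_\pi$-invariant pieces: (i) the orbit $\{\pi\}$, contributing $1$; (ii) the $\frac{q-1}{4}-1=\frac{q-5}{4}$ points represented by $\binom{x}{0}$ with $x\notin\langle i\rangle$, each fixed by $G_\pi$ since the unipotent part fixes such a vector and $\begin{psmmatrix}i & b\\ 0 & -i\end{psmmatrix}$ sends $\binom{x}{0}$ to $\binom{ix}{0}$, the same point of $P$; (iii) the remaining $\frac{q(q-1)}{4}$ points represented by $\binom{x}{y}$ with $y\neq0$. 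For a point of type (iii), acting by $\begin{psmmatrix}1 & b\\ 0 & 1\end{psmmatrix}$ runs through all of $\binom{t}{y}$, $t\in\F_q$ (distinct points of $P$, since the second coordinate is a fixed field element), and $\begin{psmmatrix}i & 0\\ 0 & -i\end{psmmatrix}$ maps this set to itself (it sends $\binom{t}{y}$ to $\binom{it}{-iy}=-i\binom{-t}{y}$); so each such set is one $G_\pi$-orbit of size $q$, and two of them coincide exactly when the two values of $y$ lie in the same $\langle i\rangle$-coset, contributing $\frac{q-1}{4}$ orbits. Hence the rank is $1+\frac{q-5}{4}+\frac{q-1}{4}=\frac{q-1}{2}$ (consistency check: $1+\frac{q-5}{4}+q\cdot\frac{q-1}{4}=\frac{q^2-1}{4}=\size{P}$), and therefore $m=\frac{q-1}{2}-1=\frac{q-3}{2}$.

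The only step requiring fresh work is the orbit count in (iii) together with the observation in (ii); everything else is bookkeeping on top of \autoref{lem:pstab}, \autoref{lem:bstab}, \autoref{lem:edges} and \autoref{lem:diagonals}. The point to get right --- and the place I expect to have to be most careful --- is precisely this dichotomy in the action of $G_\pi$: the ``scalar'' points $\binom{x}{0}$ are each fixed by the whole of $G_\pi$ (so each forms its own associate class, none of which is the edge class or the diagonal class), whereas the points with nonzero second coordinate assemble into full orbits of size $q$ indexed by $\F_q^\times/\langle i\rangle$, and one must check that no two of these pieces accidentally merge.
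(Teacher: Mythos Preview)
Your proof is correct and follows essentially the same route as the paper's: orbit--stabilizer for $b$, the edge and diagonal lemmas for $\lambda_1,\lambda_2$, and the count of $G_\pi$-orbits on $P$ (scalar points all fixed, non-scalar points assembling into $\frac{q-1}{4}$ orbits of size $q$) for the rank and hence $m$. The only cosmetic differences are that you obtain $r$ from the flag identity $vr=bk$ (the paper instead applies orbit--stabilizer inside $G_{\binom{1}{0}}$, using $\size{{G_{\binom{1}{0}}}_T}=2$ from \autoref{lem:bstab}), and that you separate the base point $\pi$ from the remaining $\frac{q-5}{4}$ scalar fixed points where the paper records all $\frac{q-1}{4}$ at once.
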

\begin{proof}
The number of points is clear from the construction. The number of blocks can be calculated by using the orbit-stabilizer theorem and the fact that, by~\autoref{lem:bstab}, $\size{G_T}=12$: $$b=\size{B}=\size{T^G}=\frac{\size{G}}{\size{G_T}}=\frac{\frac{q(q^2-1)}{2}}{12}=\frac{q(q^2-1)}{24}.$$
The fact that each block has size $k=6$ follows from the fact that the basic block $T$ has size $6$, and that the set of blocks is the orbit $T^G$ of $T$ under the action of $G$. To prove that each point lies on $r=q$ blocks it suffices to show that $\begin{psmmatrix}1\\ 0\end{psmmatrix}$ lies on $q$ blocks, because $G$ acts transitively on $P$. For that purpose we will show that $\size{T^{G_{\binom{1}{0}}}}=q$. Again, the orbit-stabilizer theorem applied to $G_{\binom{1}{0}}$ will do the job: 
$$2q=\size{G_{\binom{1}{0}}}=\size{{G_{\binom{1}{0}}}_T}\cdot \size{T^{G_{\binom{1}{0}}}}=2\cdot \size{T^{G_{\binom{1}{0}}}},$$ 
thus $\size{T^{G_{\binom{1}{0}}}}=q$. A pair of points of $P$ is either an edge of $D$, a diagonal of $D$ or neither (that is a pair that does not lie in a block together), it follows from~\autoref{lem:edges} that an edge lies on $\lambda_1=4$ blocks, from~\autoref{lem:diagonals} that a diagonal lies on $\lambda_2=1$ block, and any other pair lies on $\lambda_j=0$ ($3\leq j\leq m$) blocks. Next, the fact that the Schurian scheme of the action $(G,P)$ is the underlying association scheme follows from the fact that $G$ acts transitively on the edges of $D$ (proved in~\autoref{lem:edges}), and on the diagonals of $D$ (proved in~\autoref{lem:diagonals}). 

Finally, we compute the number of associate classes, that is, the number of classes of $\cS(G,P)$, or in other words, the rank of the transitive permutation group $(G,P)$ minus $1$. The rank of a transitive permutation group is equal to number of orbits of the stabilizer of a point, so we compute the number of orbits of $\left(G_{\binom{1}{0}},P\right)$. Recall that 
$$ G_{\binom{1}{0}}=\set{\begin{psmmatrix}u & x\\ 0 & u^{-1}\end{psmmatrix}}{u\in\{1,i\},x\in\F_q},$$ 
so for each point of the form $\begin{psmmatrix}a\\0\end{psmmatrix}\in P$ we have $\begin{psmmatrix}a\\0\end{psmmatrix}^{G_{\binom{1}{0}}}=\left\{\begin{psmmatrix}a\\0\end{psmmatrix}\right\}$, these are $\frac{q-1}{4}$ orbits. For points of the form $\begin{psmmatrix}a\\b\end{psmmatrix}\in P$, with $b\neq 0$, we have
$$
\begin{psmmatrix}a\\b\end{psmmatrix}^{G_{\binom{1}{0}}}=\set{\begin{psmmatrix}y\\b\end{psmmatrix}}{y\in\F_q},
$$
because $\begin{psmmatrix}1 & \frac{y-a}{b}\\ 0 & 1\end{psmmatrix}$ takes $\begin{psmmatrix}a\\b\end{psmmatrix}$ to $\begin{psmmatrix}y\\b\end{psmmatrix}$, so we get $\frac{q-1}{4}$ more orbits. Altogether we counted the $\frac{q-1}{2}$ orbits of $G_{\binom{1}{0}}$, thus the number of associate classes is $m=\frac{q-3}{2}$.
\end{proof}

\section*{The case $p=5$}\label{sec:pe5}

Let $q=5^{\alpha}$. In this case the basic block $T$ is in fact just the projective line over $\F_5$, and we have

\begin{lemma}\label{lem:bstab5}
The stabilizer $G_{\beta}$ of a block $\beta\in B$ in $G$ is isomorphic to $\psl(2,5)$.
\end{lemma}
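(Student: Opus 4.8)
\section*{Proof proposal for \autoref{lem:bstab5}}

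The plan is to exploit the fact, flagged in the discussion preceding \autoref{lem:pdiff5}, that when $q=5^\alpha$ the subgroup $\langle i\rangle$ coincides with the multiplicative group $\F_5^\times$ of the prime subfield. This lets me identify the basic block $T$ with the projective line $\mathbb{P}^1(\F_5)$ sitting inside $P$, after which the stabilizer falls out with essentially no computation. As in the earlier proofs, $G$ is transitive on $B$, so it suffices to compute $G_T$.

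First I would pin down $T$. Since $\F_q^\times$ is cyclic with a unique subgroup of order $4$ and $\F_5^\times\le\F_q^\times$, we have $\langle i\rangle=\F_5^\times$; in particular $i\in\F_5$, hence $1+i,\,1-i\in\F_5$, so all six representative vectors in the definition of $T$ lie in $\F_5^2$. Consequently the preimage of $T$ under the quotient map $V\setminus\{0\}\to P$ is contained in $\F_5^2\setminus\{0\}$. As $\langle i\rangle$ acts freely on $V\setminus\{0\}$ (a nonidentity $\zeta\in\langle i\rangle$ fixing $v$ would give $(\zeta-1)v=0$, forcing $v=0$), this preimage is a union of $6$ orbits of size $4$, i.e. it has $24$ elements; since $|\F_5^2\setminus\{0\}|=24$ it must equal $\F_5^2\setminus\{0\}$. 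Thus $T$ is exactly the image of $\mathbb{P}^1(\F_5)=(\F_5^2\setminus\{0\})/\F_5^\times$ in $P$.

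The stabilizer computation is then immediate. An element $\bar g=\{g,-g\}\in G=\psl(2,q)$ stabilises $T$ if and only if the representative $g\in\SL(2,q)$ maps $\F_5^2\setminus\{0\}$ into itself, equivalently $g(\F_5^2)=\F_5^2$. Feeding in the standard basis vectors shows both columns of $g$ lie in $\F_5^2$, so $g\in M_2(\F_5)$, and $\det g=1$ forces $g\in\SL(2,\F_5)$; conversely $\SL(2,\F_5)$ obviously preserves $\F_5^2$. Hence $G_T$ is precisely the image of the subfield subgroup $\SL(2,\F_5)$ in $\psl(2,q)$. Since $-I\in\SL(2,\F_5)$, the kernel of $\SL(2,\F_5)\hookrightarrow\SL(2,q)\twoheadrightarrow\psl(2,q)$ is $\{\pm I\}$, so that image is isomorphic to $\SL(2,\F_5)/\{\pm I\}=\psl(2,5)$, as claimed. (As a cross-check, orbit--stabiliser gives $|G_T|=60$: $\psl(2,5)$ is transitive on the $6$ points of $T$, and its point stabiliser is the order-$10$ Borel subgroup, namely the elements of the group in \autoref{lem:pstab} with $x\in\F_5$.)

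The only genuine obstacle is the middle paragraph: recognizing that in characteristic $5$ the octahedron $T$ degenerates into the $\F_5$-rational projective line. This is precisely the ``essential difference'' between $p=5$ and $p\neq5$ noted before \autoref{lem:pdiff5}, and once it is in hand the rest is routine facts about $\SL(2,\F_5)$ as a subfield subgroup of $\SL(2,q)$ — in particular no explicit $2\times2$ matrices need to be manipulated, in contrast with \autoref{lem:bstab}.
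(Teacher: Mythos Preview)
Your proof is correct and follows essentially the same route as the paper: identify $T$ with $\mathbb{P}^1(\F_5)$ (the paper does this by invoking \autoref{lem:pdiff5} to rewrite the six vectors explicitly, you do it by the counting argument $6\cdot4=24=|\F_5^2\setminus\{0\}|$), then observe that its stabilizer in $\psl(2,q)$ is the subfield subgroup $\psl(2,5)$. Your write-up in fact fills in the justification of this last step, which the paper leaves as a bare assertion.
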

\begin{proof}
As before, by the transitive action of $G$ on $B$ it is enough to consider $G_T$. By~\autoref{lem:pdiff5}, since $q=5^{\alpha}$, we have that $1+i=-i$. It follows that $1-i=-1$ and thus we have
$$T=\left\{\begin{psmmatrix}1\\ 0\end{psmmatrix},\begin{psmmatrix}0\\ 1\end{psmmatrix},\begin{psmmatrix}1\\ 1\end{psmmatrix},\begin{psmmatrix}-i\\ 1\end{psmmatrix},\begin{psmmatrix}i\\ 1\end{psmmatrix},\begin{psmmatrix}-1\\ 1\end{psmmatrix}\right\},$$
that is, $T$ is the projective line over $\F_5$, and it follows that subgroup of $\psl(2,q)$ that fixes the projective line over $\F_5$ is $\psl(2,5)$. 
\end{proof}

\begin{corollary}\label{cor:trans5}
$G$ acts transitively on pairs of points that lie together in a block.
\end{corollary}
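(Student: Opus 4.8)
The plan is to reduce the statement to the action of a single block stabilizer on its block, and then to recognize that action as the natural action of $\psl(2,5)$ on the projective line. First, recall that by construction $B=T^G$, so $G$ acts transitively on $B$. Now suppose $\{x_1,y_1\}$ and $\{x_2,y_2\}$ are two pairs of distinct points, each contained in some block. Using transitivity of $G$ on $B$ we may move each block to the basic block $T$, so it is enough to prove that $G_T$ acts transitively on the set of (unordered) pairs of distinct points of $T$.

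Next I would invoke \autoref{lem:bstab5} together with its proof: since $q=5^{\alpha}$, \autoref{lem:pdiff5} gives $1+i=-i$ and $1-i=-1$, so $T$ is precisely the projective line $\mathbb{P}^1(\F_5)$, and $G_T$ is the subgroup of $\psl(2,q)$ stabilizing this projective line, acting on it exactly as $\psl(2,5)$ in its natural action on $\mathbb{P}^1(\F_5)$. The point here is that the isomorphism $G_T\cong\psl(2,5)$ is not merely abstract; it is realized through the natural action on the six points of $T$, which is what lets us transfer transitivity properties of the group to transitivity properties of the action on $T$.

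Finally, I would use the standard fact that the natural action of $\psl(2,5)$ on the $6$ points of $\mathbb{P}^1(\F_5)$ is $2$-transitive (this is the familiar exceptional $2$-transitive action of $\psl(2,5)\cong A_5$ on $6$ points, which also follows from the general fact that $\psl(2,\ell)$ is $2$-transitive on $\mathbb{P}^1(\F_\ell)$). In particular $G_T$ is transitive on ordered pairs of distinct points of $T$, and a fortiori on unordered pairs, which completes the argument. It is worth remarking at this point that this is exactly where the case $p=5$ departs from the case $p\neq5$: there the block stabilizer is only $A_4$ of order $12$, the $12$ edges of the octahedron form a single $G_T$-orbit and the $3$ diagonals form a separate one (cf. \autoref{lem:edges} and \autoref{lem:diagonals}), so one obtains two orbits on pairs-in-a-common-block rather than one.

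There is essentially no serious obstacle; the only care needed is the bookkeeping noted above, namely reading off from the proof of \autoref{lem:bstab5} that $G_T\cong\psl(2,5)$ via the concrete action on $T=\mathbb{P}^1(\F_5)$, so that the $2$-transitivity of the abstract group is genuinely the $2$-transitivity of the action on the block.
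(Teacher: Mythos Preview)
Your argument is correct and is exactly the intended deduction: the paper states this as an immediate corollary of \autoref{lem:bstab5} without further proof, and your write-up simply unpacks that implication---reduce to a single block by transitivity of $G$ on $B$, then use that $G_T\cong\psl(2,5)$ acts $2$-transitively on $T=\mathbb{P}^1(\F_5)$. Your remark that the isomorphism in \autoref{lem:bstab5} is realized concretely as the natural action on $T$ (so that abstract $2$-transitivity transfers) is the only point requiring care, and you handle it appropriately.
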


We will refer to pairs of points that lie together on a block {\em adjacent} points, and correspondingly to pairs of points that do not lie together on a block as {\em non-adjacent} points.

\begin{theorem}\label{thm:pbibd5}
The block design $D$ is a $\gpbibd(m)$ with parameters 
$$
v=\frac{q^2-1}{4},\quad b=\frac{q(q^2-1)}{120},\quad k=6,\quad r=\frac{q}{5}=5^{\alpha-1},\quad \lambda_1=1,\quad \lambda_2=\dots=\lambda_m=0.
$$
The underlying association scheme is the Schurian scheme $\cS(G,P)$ of $(G,P)$, and the number of associate classes is $$m=\frac{q-3}{2}.$$
\end{theorem}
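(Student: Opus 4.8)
The plan is to repeat, almost verbatim, the proof of \autoref{thm:pbibd}, now with the block stabilizer (an $A_4$ of order $12$ in the case $p\neq 5$) replaced by $\psl(2,5)$ of order $60$ via~\autoref{lem:bstab5}, and exploiting that in characteristic $5$ the basic block $T$ is a projective line, so all pairs of points lying on a common block form a single type and the edge/diagonal dichotomy of the previous case disappears. The equalities $v=\frac{q^2-1}{4}$ and $k=6$ are immediate from the construction of $P$ and $B$. For $b$ I apply orbit--stabilizer to $B=T^G$: since $\size{G}=\size{\psl(2,q)}=\frac{q(q^2-1)}{2}$ and $\size{G_T}=\size{\psl(2,5)}=60$, we get $b=\size{G}/\size{G_T}=\frac{q(q^2-1)}{120}$.

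For the replication number, transitivity of $G$ on $P$ reduces the problem to counting blocks through $\binom{1}{0}$, that is $\size{T^{G_{\binom{1}{0}}}}$, and orbit--stabilizer inside $G_{\binom{1}{0}}$ gives $\size{G_{\binom{1}{0}}}=\size{G_{\binom{1}{0}}\cap G_T}\cdot\size{T^{G_{\binom{1}{0}}}}$. Here $\size{G_{\binom{1}{0}}}=2q$ by~\autoref{lem:pstab}, while $G_{\binom{1}{0}}\cap G_T$ is the stabilizer of the point $\binom{1}{0}\in T$ inside $G_T\cong\psl(2,5)$ acting on the six points of the projective line over $\F_5$; that action being transitive, this subgroup has order $60/6=10$, so $r=2q/10=q/5=5^{\alpha-1}$.

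For the $\lambda_j$, \autoref{cor:trans5} says $G$ is transitive on adjacent pairs, so each adjacent pair lies on a common number $\lambda_1$ of blocks and each non-adjacent pair on none, giving $\lambda_2=\cdots=\lambda_m=0$. To find $\lambda_1$ I compute the stabilizer in $G$ of the adjacent pair $d=\{\binom{1}{0},\binom{0}{1}\}$, just as in the proofs of~\autoref{lem:edges} and~\autoref{lem:diagonals}: an element of $G_d$ either fixes both points or swaps them, and imposing $c=0$ (resp.\ $a=0$), the determinant-one condition, and the identification by $\langle i\rangle$ leaves exactly four representative matrices, so $\size{G_d}=4$. Since $\binom{1}{0},\binom{0}{1}\in T$, \autoref{cor:trans5} identifies $d^G$ with the full set of adjacent pairs, so there are $\size{G}/4=\frac{q(q^2-1)}{8}$ of them. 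Double counting $\set{(\delta,\beta)}{\delta\text{ adjacent},\ \beta\in B,\ \delta\subseteq\beta}$ gives $\binom{6}{2}b=15b$ on one side---every pair inside a block is adjacent---and $\frac{q(q^2-1)}{8}\lambda_1$ on the other; as $15b=\frac{q(q^2-1)}{8}$, this forces $\lambda_1=1$.

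Finally, that $\cS(G,P)$ may be taken as the underlying scheme follows from~\autoref{cor:trans5} exactly as in~\autoref{thm:pbibd}: adjacency is a single associate class (or a pair of mutually inverse classes) on which $\lambda$ is constant, and $\lambda$ vanishes on every other class. And $m$ is a property of $(G,P)$ alone: $G_{\binom{1}{0}}$ is the same subgroup as in the case $p\neq 5$ (\autoref{lem:pstab}), so the orbit count of $G_{\binom{1}{0}}$ on $P$ is verbatim that of~\autoref{thm:pbibd}, producing $\frac{q-1}{2}$ orbits and hence $m=\frac{q-3}{2}$. The only genuinely new ingredients over the case $p\neq 5$ are the equality $\size{G_{\binom{1}{0}}\cap G_T}=10$---equivalently, transitivity of $G_T\cong\psl(2,5)$ on the six points of $T$, which is where~\autoref{lem:bstab5} does the real work---and the short stabilizer computation for $\lambda_1$; I do not expect either to pose a genuine obstacle, the remainder being bookkeeping with the constants $60$ and $\frac{q(q^2-1)}{120}$.
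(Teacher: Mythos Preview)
Your proof is correct and follows essentially the same strategy as the paper: orbit--stabilizer for $b$ and $r$, transitivity on adjacent pairs for the $\lambda_j$, and the verbatim orbit count of $G_{\binom{1}{0}}$ on $P$ for $m$. The only notable difference is in establishing $\lambda_1=1$: the paper observes directly that the four matrices generating $G_e$ have entries in $\F_5$ and hence $G_e\leq G_T=\psl(2,5)$, which forces the block through $e$ to be unique, whereas you reach the same conclusion via the double count $15b=\size{e^G}\cdot\lambda_1$; both arguments are short and equivalent.
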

\begin{proof}
The number of points is clear from the construction. The number of blocks can be calculated by using the orbit-stabilizer theorem and the fact that, by~\autoref{lem:bstab5}, $\size{G_T}=60$: $$b=\size{B}=\size{T^G}=\frac{\size{G}}{\size{G_T}}=\frac{\frac{q(q^2-1)}{2}}{60}=\frac{q(q^2-1)}{120}.$$
The fact that each block has size $k=6$ follows from the fact that the basic block $T$ has size $6$, and that the set of blocks is the orbit $T^G$ of $T$ under the action of $G$. To prove that each point lies on $r=\frac{q}{5}=5^{\alpha-1}$ blocks it suffices to show that $\begin{psmmatrix}1\\ 0\end{psmmatrix}$ lies on $5^{\alpha-1}$ blocks, because $G$ acts transitively on $P$. For that purpose we will show that $\size{T^{G_{\binom{1}{0}}}}=5^{\alpha-1}$. Again, the orbit-stabilizer theorem applied to $G_{\binom{1}{0}}$ will do the job: 
$$2q=\size{G_{\binom{1}{0}}}=\size{{G_{\binom{1}{0}}}_T}\cdot \size{T^{G_{\binom{1}{0}}}}.$$
By~\autoref{lem:bstab5} we have
$${G_{\binom{1}{0}}}_T=G_T\cap G_{\binom{1}{0}}=\psl(2,5)\cap G_{\binom{1}{0}},$$
that is, ${G_{\binom{1}{0}}}_T=\set{\begin{psmmatrix}u & x\\ 0 & u^{-1}\end{psmmatrix}}{u\in\{1,i\},x\in\F_5}$ and $\size{{G_{\binom{1}{0}}}_T}=10$, thus $2q=10\cdot\size{T^{G_{\binom{1}{0}}}}$ and therefore $\size{T^{G_{\binom{1}{0}}}}=\frac{2q}{10}=\frac{2\cdot 5^{\alpha}}{2\cdot 5}=5^{\alpha-1}$. The number of pairs of adjacent points is $$\binom{6}{2}\cdot\size{B}=15\cdot\frac{q(q^2-1)}{120}=\frac{q(q^2-1)}{8}.$$ To show that each pair of adjacent points determines a unique block it is enough to show that the stabilizer of a pair of adjacent points stabilizes that whole block. So by~\autoref{cor:trans5} we may assume that $e=\left\{\begin{psmmatrix}1\\ 0\end{psmmatrix},\begin{psmmatrix}0\\ 1\end{psmmatrix}\right\}$ is our pair of adjacent points on the block $T$. We need to show that $G_e\leq G_T$. Recall that the calculation in~\autoref{lem:edges} showed that 
$$
G_e=\left\langle
\begin{psmmatrix}0 & -1\\ 1 & 0\end{psmmatrix},
\begin{psmmatrix}0 & i\\ i & 0\end{psmmatrix}
\right\rangle,
$$
in this case, clearly $G_e\leq\psl(2,5)=G_T$. Finally, the fact that the Schurian scheme of the action $(G,P)$ is the underlying association scheme follows from the fact that $G$ acts transitively on the pairs of adjacent points $D$ (stated in~\autoref{cor:trans5}), the computation of the number of associate classes is identical to that given in~\autoref{thm:pbibd}.
\end{proof}

At this point it seems interesting to ask:

\begin{question}
Can we find an underlying association scheme for $D$ with less vanishing classes?
\end{question}

The following two sections deal with this question. In~\autoref{sec:larger_group} we find the largest (in a certain natural sense) group acting on $D$ producing an underlying Schurian association scheme with less vanishing classes. In~\autoref{sec:lambda_partition_stabilization} we compute the underlying association scheme with the smallest possible number of vanishing classes for all $\gpbibd$s with $q\leq 169$, and propose a general conjecture about the minimal association schemes.

\section{A smaller rank underlying association scheme}\label{sec:larger_group}

In this section we prove that in fact we have a larger group acting on $D$. Let $\phi$ be the Frobenius automorphism of $\F_q$, that is, the map that takes each element $a\in\F_q$ to its $p$-th power $a^p\in\F_q$. We denote by $F=\langle \phi\rangle$ the automorphism group $\aut(\F_q)$. The projective special semilinear group, commonly denoted by $\psil(2,q)$, is the semidirect product $\psl(2,q)\rtimes F$, where the cyclic group $F$ is acting on $\psl(2,q)$ in the natural way: 
$$\begin{psmmatrix}a & b\\ c & d\end{psmmatrix}^{\phi}=\begin{psmmatrix}a^p & b^p\\ c^p & d^p\end{psmmatrix}.$$

\begin{theorem}\label{thm:full_aut}
The largest automorphism group $\cG$ of $D$ that preserves the octahedral structure is isomorphic to $\psil(2,q)\times C_2$.
\end{theorem}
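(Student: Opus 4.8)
\emph{Overview and the group.} The plan is to exhibit $\psil(2,q)\times C_2$ explicitly as a group preserving all the structure, and then to bound $\cG$ from above by an orbit--stabilizer count at the basic octahedron $T$. These automorphisms are most naturally realized inside $\Gamma\mathrm{L}(2,q)/\{\pm I\}$ acting on the double cover $\widehat P:=(V\setminus\{0\})/\{\pm 1\}$ of $P$ (one divides out only $\{\pm1\}$ instead of all of $\langle i\rangle$). Three ingredients live there. First, $\psl(2,q)=\SL(2,q)/\{\pm I\}$ preserves $D$ and its octahedral structure by construction: transporting the structure from $T$ to $T^g$ is consistent because $G_T\cong A_4$ is a group of octahedral symmetries (\autoref{lem:bstab}). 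Second, the Frobenius $\phi$ descends to $\widehat P$ and $P$ --- since $\langle i\rangle$ is the unique subgroup of order $4$ of the cyclic group $\F_q^\times$ --- normalizes $\psl(2,q)$ through its coordinatewise action on matrices, and carries $T$ into $B$: $\phi(T)$ is $T$ itself when $i^\phi=i$, and is the basic octahedron built from the primitive fourth root $-i$ otherwise, in which case $\begin{psmmatrix}1 & -i\\ 0 & 1\end{psmmatrix}\in\psl(2,q)$ maps $T$ onto it, so after composing with a suitable element of $\psl(2,q)$ we may take $\phi$ to fix $T$ setwise. Third, the scalar $\bar\imath:=[\,iI\,]$ is central in $\Gamma\mathrm{L}(2,q)/\{\pm I\}$, has order $2$ on $\widehat P$ (since $i^2=-1$), acts trivially on $P$, and is not in $\psil(2,q)$ because its determinant is $-1$. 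Hence $\langle\psl(2,q),\phi\rangle\cong\psl(2,q)\rtimes F=\psil(2,q)$ acts (faithfully on $P$) preserving the octahedral structure, and $\langle\psil(2,q),\bar\imath\rangle\cong\psil(2,q)\times C_2$ preserves the lifted structure on $\widehat P$; moreover $\langle\bar\imath\rangle$ is exactly the kernel of $\cG\to\mathrm{Sym}(P)$. This gives the inclusion $\cG\supseteq\psil(2,q)\times C_2$.

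\emph{The count.} Let $\sigma\in\cG$. Since $\psl(2,q)$ is transitive on $B$, after composing with an element of $\psl(2,q)$ we may assume $\sigma$ stabilizes $T$. Then $\sigma$ induces a symmetry of the octahedron $T$ --- an element of $\aut(K_{2,2,2})$, a group of order $48$ --- together with an action on the two-element fibre over each vertex of $T$. The heart of the matter is that $|\cG_T|=24\alpha$: the image of $\cG_T$ in $\aut(K_{2,2,2})$ is exactly $A_4$ (the image of $G_T$; the Frobenius part contributes nothing new, acting on $T$ either trivially (when $i^\phi=i$) or, after the adjustment above, as the order-$2$ rotation $\begin{psmmatrix}-i & -1+i\\ 0 & i\end{psmmatrix}\in G_T$), while the kernel of $\cG_T\to\aut(K_{2,2,2})$ has order $2\alpha$ (generated by $\bar\imath$ and the order-$\alpha$ subgroup of $\psil(2,q)$ fixing $T$ pointwise). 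Given this, orbit--stabilizer yields
\[
|\cG| \;=\; |B|\cdot|\cG_T| \;=\; \frac{q(q^2-1)}{24}\cdot 24\alpha \;=\; \alpha\,q(q^2-1) \;=\; \bigl|\psil(2,q)\times C_2\bigr|,
\]
so the inclusion above is an equality.

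\emph{The main obstacle.} Everything rests on the claim that no ``exotic'' octahedral symmetry of $T$ --- a rotation outside $A_4$, the antipodal involution, or a reflection --- extends to an element of $\cG$. I would prove this by a rigidity computation in the spirit of Lemmas~\ref{lem:bstab}, \ref{lem:edges} and \ref{lem:diagonals}: suppose $\sigma\in\cG$ fixes $T$ and acts on it as a prescribed such symmetry, write $\sigma$ as a matrix modulo $\pm I$ (possibly with a Frobenius twist), and force a contradiction by following the images of a point or two outside $T$ that are tied to $T$ through the design --- for instance, using that each diagonal lies in exactly one block and each edge in four (\autoref{lem:edges}, \autoref{lem:diagonals}), so the diagonal partner of a vertex $\pi\in T$ in a block is determined by that block, and a structure-preserving automorphism fixing $T$ cannot carry it to an edge neighbour. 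This step is where essentially all the work sits, and I expect it --- rather than the construction of the group or the orbit--stabilizer bookkeeping --- to be the crux.
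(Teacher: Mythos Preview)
The paper's own proof is a two-sentence sketch: it asserts that $\psil(2,q)$ acts, and identifies the extra $C_2$ as ``the missing $90^{\circ}$ rotation about the axis formed by two antipodal vertices of the octahedron.'' No upper bound is argued. So your proposal is far more detailed than what the paper supplies---but it diverges from the paper at the one point the paper does pin down, and I think that divergence is a genuine error.

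\textbf{The identification of the $C_2$.} Your $\bar\imath=[iI]$ acts trivially on $P$: you say so yourself, and you place it in the kernel of $\cG\to\mathrm{Sym}(P)$. But $D=(P,B)$, together with its octahedral structure (the edge/diagonal partition of pairs), is a structure \emph{on $P$}: an automorphism of $D$ is by definition a permutation of $P$ carrying blocks to blocks and edges to edges. There is no room for a nontrivial kernel---anything acting trivially on $P$ acts trivially on $D$. Passing to the double cover $\widehat P$ introduces structure that is not part of $D$, so your $\bar\imath$ is not an element of $\cG$ at all. By contrast, the paper's $\sigma$ is an honest permutation of $P$: it is realized (for instance) by the matrix $\begin{psmmatrix}-i & i\\ 0 & 1\end{psmmatrix}$ of determinant $-i$, which fixes the axis $\binom{1}{0}$--$\binom{1}{1-i}$ and four-cycles the equator $\binom{0}{1}\to\binom{i}{1}\to\binom{1+i}{1}\to\binom{1}{1}$. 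This lies outside $\SL(2,q)$ and outside $\psil(2,q)$, and its square is the $180^{\circ}$ rotation already in $G_T$.

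\textbf{Knock-on error in the count.} Because you have put the extra involution in the kernel rather than in the image, you assert that the image of $\cG_T$ in $\aut(K_{2,2,2})$ is exactly $A_4$. The paper's $\sigma$ shows this is false: with $\sigma$ in hand the image contains a $4$-cycle and is (at least) $S_4$. Your ``main obstacle'' paragraph then proposes to rule out exactly the symmetries that the paper is telling you \emph{do} extend. So the orbit--stabilizer bookkeeping, while sound in form, is being fed the wrong stabilizer.

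\textbf{What survives.} Your argument that $\psil(2,q)$ acts on $D$ preserving the octahedral structure (including the adjustment of $\phi$ by an element of $\psl(2,q)$ so as to fix $T$) is fine and more careful than the paper. The upper bound is not proved in the paper either, so your instinct that the real work lies there is correct---but the target for $\cG_T$ should be $24\alpha$ via image $S_4$ and kernel of order $\alpha$ coming from semilinear maps fixing $T$ pointwise, not via image $A_4$ and a kernel containing a phantom $\bar\imath$.
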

\begin{proof}
Clearly $\psil(2,q)$ acts on $D$. The additional involution $C_2=\langle \sigma\rangle$ is the missing $90^{\circ}$ rotation about the axis formed by two antipodal vertices of the octahedron. 
\end{proof}

\subsection*{Computing the number of orbits of $F$ on $P$}

We use Burnside's lemma and standard Galois cohomology tools to compute the number of orbits $\bigslant{P}{F}$ of $F$ on $P=\bigslant{{\F_q}^{\times}}{\mu_4}$ and $q=p^n$. As usual we denote by $P^g$ the fixed points of $P$ under the action of $g$. Burnside's lemma establishes that
$$
\left\vert\bigslant{P}{F}\right\vert=\frac{1}{\vert F\vert}\sum_{g\in F} \left\vert P^g\right\vert.
$$
For each divisor $d$ of $n$ we have a subfield $\F_{p^d}$ of $\F_{p^n}$, the elements of $\F_{p^d}$ are precisely the fixed points of $\phi^d$. Moreover, if $k$ is a multiple of $d$ that does not divide $n$ then the fixed points of $\phi^k$ are again the elements of $\F_{p^d}$, thus we can rewrite the above sum as follows:
$$
\left\vert\bigslant{P}{F}\right\vert=\frac{1}{\vert F\vert}\sum_{g\in F} \left\vert P^g\right\vert=\frac{1}{\vert F\vert}\sum_{d\vert n}\varphi\left(\frac{n}{d}\right) \left\vert P^{\phi^d}\right\vert
$$  
where $\varphi$ is Euler totient function. To compute $\left\vert P^{\phi^d}\right\vert$ we use Galois cohomology. For clearer notation we denote $K:=\phi^d$. The short exact sequence
$$
1\lra \mu_4\lra {\F_q}^{\times}\lra \bigslant{{\F_q}^{\times}}{\mu_4}\lra 1
$$
provides a long exact sequence in group cohomology
$$
1\lra {\mu_4}^K\lra \left({\F_q}^{\times}\right)^K\lra \left(\bigslant{{\F_q}^{\times}}{\mu_4}\right)^K\lra H^1(K,\mu_4)\lra H^1(K,{\F_q}^{\times}) \lra\dots
$$
By Hilbert's Theorem 90 we have 
$$
H^1(K,{\F_q}^{\times})=1
$$
so we get the following exact sequence
$$
1\lra {\mu_4}^K\lra \left({\F_q}^{\times}\right)^K\lra \left(\bigslant{{\F_q}^{\times}}{\mu_4}\right)^K\lra H^1(K,\mu_4)\lra  1
$$
and we are interested in 
$$\left\vert\left(\bigslant{{\F_q}^{\times}}{\mu_4}\right)^K\right\vert=\frac{\left\vert H^1(K,\mu_4)\right\vert\cdot\left\vert \left({\F_q}^{\times}\right)^K\right\vert}{\left\vert {\mu_4}^K\right\vert}.$$
Clearly, we have $\left\vert \left({\F_q}^{\times}\right)^K\right\vert=p^d-1$. We deal with cases $p\equiv 1\pmod 4$ and $p\equiv 3\pmod 4$ separately.
\subsection*{The case $p\equiv 1\pmod 4$} In this case $\mu_4$ is already contained in $\F_p$ and thus is fixed by $K$ so $\left\vert {\mu_4}^K\right\vert=4$ and we have
$$\left\vert\left(\bigslant{{\F_q}^{\times}}{\mu_4}\right)^K\right\vert=\left\vert H^1(K,\mu_4)\right\vert\cdot\frac{p^d-1}{4}.$$
Now, since $\mu_4$ is a trivial $K$-module we have
$$
H^1(K,\mu_4)=\Hom(K,\mu_4)\cong\begin{cases} 
C_4 & \vert K\vert\equiv 0\pmod 4\\
C_2 & \vert K\vert\equiv 2\pmod 4\\
1 & \text{otherwise}
\end{cases}
$$
so if we denote $h(d):=\vert H^1(K,\mu_4)\vert$ (this number depends on the residue of $\frac{n}{d}$ modulo $4$) we have
$$
\size{\bigslant{P}{F}}=\frac{1}{\size{F}}\sum_{d\vert n}\varphi\left(\frac{n}{d}\right)\cdot h(d)\cdot\frac{p^d-1}{4}.
$$

\subsection*{The case $p\equiv 3\pmod 4$} In this case $\mu_4$ is contained in $\F_{p^2}$ but not in $\F_P$ so for every odd power of $\phi$ there are just $2$ fixed points, and for every even power of $\phi$ there are $4$ fixed points, that is:
$$
\size{{\mu_4}^K}=\begin{cases}
2 & d\text{ odd}\\
4 & d\text{ even}
\end{cases}.
$$
The computation of $\size{H^1(K,\mu_4)}$ in this case is a bit more involved. If $\frac{n}{d}$ is even then $\mu_4$ is again a trivial $K$-module and we have
$$
H^1(K,\mu_4)=\Hom(K,\mu_4)\cong\begin{cases} 
C_4 & \vert K\vert\equiv 0\pmod 4\\
C_2 & \vert K\vert\equiv 2\pmod 4\\
\end{cases}.
$$
For $d$ odd, $\mu_4$ is no longer a trivial $K$-module. There are two cyclic groups here, and an action of one of them on the other, considered as a module, so we will use additive notation for $\mu_4$ (that is, the group of integers modulo $4$), and write $a\cdot m$ to denote the action of $a$ on $m$, where $a\in K$ and $m\in\mu_4$, the operation in $K$ will be denoted simply $ab$ for $a,b\in K$. Fix a generator $\alpha$ of $K$, the the action of $K$ on $\mu_4$ is
$$
\alpha\cdot 0=0,\quad \alpha\cdot 1=3,\quad \alpha\cdot 2=2,\quad \alpha\cdot 3=1.
$$
The $1$-cocycles $Z^1(K,\mu_4)$ are thus the functions $f:K\longrightarrow\mu_4$ satisfying
$$
f(ab)=f(a)+a\cdot f(b) 
$$
and the $1$-coboundaries $B^1(K,\mu_4)$ are those satisfying
$$
f(a)=a\cdot m -m\qquad\text{for some}\quad m\in\mu_4.
$$
A $1$-cocycle is determined by its value on $\alpha$. So let $f\in B^1(K,\mu_4)$, one can verify that out of the $4$ options (one option for each $m\in\mu_4$) there are just $2$ elements in $B^1(K,\mu_4)$, explicitly, these are the functions
$$
f(\alpha)=\alpha\cdot 0 -0\quad\text{and}\quad f(\alpha)=\alpha\cdot 1 -1.
$$
Consider now $Z^1(K,\mu_4)$. Let $f\in Z^1(K,\mu_4)$ then out of the $4$ options
$$
f(\alpha)=0,\quad f(\alpha)=1,\quad f(\alpha)=2,\quad f(\alpha)=3,
$$
it is easy to verify that $f(\alpha)=0$ and $f(\alpha)=2$ are both in $B^1(K,\mu_4)$. Also, $f(\alpha)=1$ and $f(\alpha)=3$ are cohomologous (their difference is in $B^1(K,\mu_4)$), so altogether we get
$$
H^1(K,\mu_4)=\bigslant{Z^1(K,\mu_4)}{B^1(K,\mu_4)}\cong C_2.
$$
Denote $h(d):=\frac{\size{H^1(K,\mu_4)}}{\size{{\mu_4}^K}}$ summarizing the above we have
$$
h(d)=\begin{cases}
1 & \text{$d$ odd; or $d$ even and $\frac{n}{d}\equiv 0\pmod 4$}\\
\frac{1}{2} & \text{$d$ even and $\frac{n}{d}\equiv 2\pmod 4$}\\
\frac{1}{4} & \text{$d$ even and $\frac{n}{d}$ odd}
\end{cases}
$$
and
$$
\size{\bigslant{P}{F}}=\frac{1}{\size{F}}\sum_{d\vert n}\varphi\left(\frac{n}{d}\right)\cdot h(d)\cdot(p^d-1).
$$

\begin{corollary}\label{cor:smallest_rank}
$D$ is a $\gpbibd(m)$ with $m=2\cdot\size{\bigslant{P}{F}}-1$ associate classes.
\end{corollary}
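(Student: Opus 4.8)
The plan is to work with the enlarged group from \autoref{thm:full_aut}: I must check that $D$ is still a $\gpbibd$ with underlying scheme $\cS(\cG,P)$, and that this scheme has exactly $2\cdot\size{\bigslant PF}-1$ classes. The first point is immediate. Since $\cG\supseteq G=\psl(2,q)$ and $G$ is already transitive on $P$, on the edges of $D$ (\autoref{lem:edges}) and on the diagonals of $D$ (\autoref{lem:diagonals}), the group $\cG$ is transitive on each of these three sets as well; hence in the homogeneous coherent configuration $\cS(\cG,P)$ the edge‑relation and the diagonal‑relation are each a single class, so $D$ is a $\gpbibd$ with underlying scheme $\cS(\cG,P)$ and the same nonvanishing parameters $\lambda_j$ as before. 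Thus everything reduces to showing that the rank of the action $(\cG,P)$ equals $2\cdot\size{\bigslant PF}$, i.e.\ that $\cG$ has $2\cdot\size{\bigslant PF}$ orbits on $P\times P$.

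First I would describe the $G$‑orbits on $P\times P$. Writing a point of $P$ as the class $[v]$ of a nonzero $v\in V$ modulo $\mu_4=\langle i\rangle$, attach to a pair $([v],[w])$ the coset $\det(v\mid w)\,\mu_4\in\F_q/\mu_4$ (with $0\cdot\mu_4=\{0\}$), and when this coset is $\{0\}$ also the coset $\lambda\,\mu_4\in\bigslant{\F_q^\times}{\mu_4}$, where $w=\lambda v$. A short verification — using that $g\in\SL(2,q)$ satisfies $\det(gv\mid gw)=\det(v\mid w)$, that $\SL(2,q)$ is transitive on ordered bases of a prescribed determinant, and that the stabiliser of $v$ in $\SL(2,q)$ fixes every scalar multiple of $v$ — shows that this data is a complete invariant of the $G$‑action (the $\pm I$ and $\mu_4$ ambiguities are exactly what is killed by passing to cosets modulo $\mu_4$). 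Hence the $G$‑orbits on $P\times P$ are in bijection with two disjoint copies of $\bigslant{\F_q^\times}{\mu_4}$, namely the ``$\det\neq 0$'' orbits and the ``proportional'' orbits; this recovers the value $m=\frac{q-3}{2}$ from \autoref{thm:pbibd} for $(G,P)$.

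Next I would let $\cG$ act on this set of $G$‑orbits. Since $G\trianglelefteq\cG$ and $G$ is transitive on $P$, we have $\cG=G\cdot\cG_\pi$ for any $\pi\in P$, so $\cG$ acts on the set of $G$‑orbits through $\cG/G\cong F\times C_2$. The Frobenius $\phi$ sends the orbit labelled $c\,\mu_4$ to the one labelled $c^{\,p}\mu_4$ on each copy, because $\det(\phi v\mid\phi w)=\det(v\mid w)^p$ and $\phi(\lambda v)=\lambda^{\,p}\phi(v)$; thus $F$ acts as two copies of its natural action on $\bigslant{\F_q^\times}{\mu_4}$. For the remaining involution $\sigma$ — the missing $90^\circ$ vertex‑rotation exhibited in the proof of \autoref{thm:full_aut} — one uses its explicit form to see that it preserves the ``$\det\neq 0$''/``proportional'' dichotomy and sends each label $c\,\mu_4$ to $c^{\,\pm p^{k}}\mu_4$ for a fixed $k$, so it permutes the $G$‑orbits exactly as some element of $F$ does. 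Consequently $F\times C_2$ and $F$ have the same orbits on the set of $G$‑orbits on $P\times P$, and the number of $\cG$‑orbits on $P\times P$ equals the number of $F$‑orbits on two disjoint copies of $\bigslant{\F_q^\times}{\mu_4}$, that is $2\cdot\size{\bigslant PF}$, with $\size{\bigslant PF}$ the quantity evaluated in the Burnside/Galois‑cohomology discussion above. Hence $D$ is a $\gpbibd(m)$ with $m=2\cdot\size{\bigslant PF}-1$.

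The step I expect to be the real work is pinning down $\sigma$ precisely enough to carry out the last argument (one needs to know its action on $P$, not just that it exists). If one only wants to exhibit \emph{some} underlying scheme with $2\cdot\size{\bigslant PF}-1$ classes, this can be sidestepped: the subgroup $\psil(2,q)\le\cG$ already preserves the octahedral structure and is transitive on $P$, on the edges, and on the diagonals, and by the computation of the previous two paragraphs (with $F$ alone) its action on $P\times P$ has exactly $2\cdot\size{\bigslant PF}$ orbits. Everything else is routine linear algebra over $\F_q$ together with the orbit‑counting lemma.
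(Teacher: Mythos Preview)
The paper states this corollary without proof, treating it as an immediate consequence of \autoref{thm:full_aut} together with the orbit count $\size{\bigslant{P}{F}}$ just computed; your proposal supplies exactly the argument that the paper leaves implicit, and it is correct.

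Your parametrisation of the $G$-orbits on $P\times P$ by two disjoint copies of $\F_q^\times/\mu_4$ (via the determinant coset when the pair is independent, and the proportionality ratio when it is not) is the right way to see the factor~$2$, and matches the orbit description hidden in the proof of \autoref{thm:pbibd}. Your observation that Frobenius sends the label $c\,\mu_4$ to $c^{\,p}\mu_4$ on each copy then identifies the $\psil(2,q)$-rank with $2\cdot\size{\bigslant{P}{F}}$, which is precisely the quantity the Burnside/Galois-cohomology section evaluates (note that in that section the symbol $P$ is silently reused for $\F_q^\times/\mu_4$ rather than for the point set of $D$).

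You are also right to isolate $\sigma$ as the one genuinely unfinished step: the paper never writes $\sigma$ down as a map on $P$, only describes it as ``the missing $90^\circ$ rotation'', so verifying from the text alone that it causes no further fusion of classes is not possible. Your fallback is the clean way out: the subgroup $\psil(2,q)\le\cG$ already acts on $D$ preserving edges and diagonals, and by your computation its Schurian scheme on $P$ has exactly $2\cdot\size{\bigslant{P}{F}}-1$ non-diagonal classes. Since the corollary only asserts the existence of such an underlying scheme, this suffices.
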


\section{The smallest rank underlying association scheme}\label{sec:lambda_partition_stabilization}

We have constructed all the $\gpbibd$s with $q\leq 169$ on {\sf GAP}~\cite{GAP4}, we used the program {\sf stabil2} to perform WL-stabilization on the partition of $P\times P$ into $\lambda$-classes (a $\lambda$-class consists of all pairs in $P\times P$ that lie on $\lambda$ blocks) in order to obtain an underlying association scheme with the smallest number of classes possible for each incidence structure. It turns out that in a considerable number of cases we in fact obtain an underlying scheme of smaller rank than what is provided by~\autoref{cor:smallest_rank}. It follows from~\autoref{thm:full_aut} that in those cases the underlying scheme is non-Schurian.
\newpage
%
%
%
%
%
%

\begin{center}
\begin{longtable}{ | c | c | c | c | m{4cm} |}
\hline
\thead{$q$} & \thead{Class\\ Number in\\ \autoref{cor:smallest_rank}} & \thead{Smallest\\ Class\\ Number} & \thead{Parameters\\ $(v,b,r,k;\lambda_1,\dots ,\lambda_m)$} & \thead{Remarks} \\ 
\hline\hline
\hline
$9$ & $3$ & $3$ & $(20,30,6,9;4,1,0)$ & \\
\hline
$13$ & $5$ & $5$ & $(42,91,6,13;4,1,0,0,0)$ & \\
\hline
$17$ & $7$ & $7$ & $(72,204,6,17;4,1,0,\dots,0)$ & \\
\hline
$25$ & $7$ & $3$ & $(156,130,6,5;1,0,0)$ & Non-Schurian. This is an antipodal distance regular graph of diameter $3$, a $6$-fold cover of $K_4$.\\
\hline
$29$ & $13$ & $13$ & $(210,1015,6,29;4,1,0,\dots,0)$ & \\
\hline
$37$ & $17$ & $17$ & $(342,2109,6,37;4,1,0,\dots,0)$ & \\
\hline
$41$ & $19$ & $11$ & $(420,2870,6,41;4,1,0,\dots,0)$ & Non-Schurian. Non-commutative.\\
\hline
$49$ & $17$ & $13$ & $(600,4900,6,49;4,1,0,\dots,0)$ & Non-Schurian. Non-commutative.\\
\hline
$53$ & $25$ & $25$ & $(702,6201,6,53;4,1,0,\dots,0)$ & \\
\hline
$61$ & $29$ & $29$ & $(930,9455,6,61;4,1,0,\dots,0)$ & \\
\hline
$73$ & $35$ & $35$ & $(1332,16206,6,73;4,1,0,\dots,0)$ & \\
\hline
$81$ & $13$ & $5$ & $(1640,22140,6,81;4,1,0,0,0)$  & non-Schurian.\\
\hline
$89$ & $43$ & $43$ & $(1980,29370,6,89;4,1,0,\dots,0)$ & \\
\hline
$97$ & $47$ & $47$ & $(2352,38024,6,97;4,1,0,\dots,0)$ & \\
\hline
$101$ & $49$ & $49$ & $(2550,42925,6,101;4,1,0,\dots,0)$ & \\
\hline
$109$ & $53$ & $19$ & $(2970,53955,6,109;4,1,0,\dots,0)$ & Non-Schurian. Non-commutative. \\
\hline
$113$ & $55$ & $15$ & $(3192,60116,6,113;4,1,0,\dots,0)$ & Non-Schurian. Non-commutative. \\
\hline
$121$ & $39$ & $21$ & $(3660,73810,6,121;4,1,0,\dots,0)$ & non-Schurian. \\
\hline
$125$ &  & $3$ & $(3906,16275,6,25;1,0,\dots,0)$ & non-Schurian. This is an antipodal distance regular graph of diameter $3$, a $31$-fold cover of $K_4$.\\
\hline
$137$ & & & & \\
\hline
$149$ & & & & \\
\hline
$157$ & & & & \\
\hline
$169$ & $47$ & $7$ & $(7140,201110,6,169;4,1,0,\dots,0)$ & non-Schurian. \\
\hline
\end{longtable}
\end{center}

\subsection*{Acknowledgements}
I thank Rosemary Bailey and Eran Nevo for helpful comments on earlier versions of this text.

\bibliographystyle{plain}
\bibliography{gpbibds}

\end{document}